\newcommand{\R}{\mathbb R}
\newcommand{\Half}{\mathbb H} 
\newcommand{\Disk}{\mathbb{D}} 
\newcommand{\eps}{\epsilon} 
\renewcommand{\Pr}{\mathbb{P}}
\newcommand{\Prob}[1]{\Pr\left\{#1\right\}}
\newcommand{\E}{\mathbb{E}} 
\newcommand{\bd}{\partial} 
\newcommand{\di}{\, d}
\newcommand{\dd}{d}
\newcommand{\dist}  {{\rm dist}} 
\newcommand{\cGF}{\overline{G}} 
\newcommand{\SK}{\mathcal{H}} 
\renewcommand{\Im}{{\rm Im}} 
\renewcommand{\Re}{{\rm Re}} 
\newcommand{\crad}{\Upsilon} 
\newcommand {\cradt}{\tilde \Upsilon} 
\renewcommand{\phi}{\varphi}
\newcommand{\eref}{\eqref}
\newtheorem{theorem}{Theorem}[section]
\newtheorem{proposition}[theorem]{Proposition}
\newtheorem{lemma}[theorem]{Lemma}
\theoremstyle{remark}
\begin{document}

\title[The Green's function for radial SLE]{The Green's function for the radial
Schramm-Loewner evolution}

\author{Tom Alberts}
\address{Department of Mathematics, California Institute of Technology, Pasadena, CA 91125, USA}
\email{alberts@caltech.edu}

\author{Michael J Kozdron}
\address{Department of Mathematics \& Statistics, University of Regina, Regina, SK S4S 0A2, Canada}
\email{kozdron@stat.math.uregina.ca}

\author{Gregory F Lawler}
\address{Department of Mathematics, University of Chicago, Chicago, IL 60637, USA}
\email{lawler@math.uchicago.edu}

\begin{abstract}
We prove the existence of the Green's function for radial  SLE$_\kappa$ for $\kappa < 8$. Unlike the chordal case where an explicit formula for the Green's function is known for all values of $\kappa<8$, we give an explicit formula only for $\kappa = 4$. For other values of $\kappa$, we give a formula in terms of an expectation with respect to SLE conditioned to go through a point.
\end{abstract}

\subjclass[2010]{60J67,  82B31}

\maketitle

\section{Introduction}

The Schramm-Loewner evolution is a one-parameter family of two-dimensional  random growth processes  that was introduced in 1999 by the late Oded Schramm~\cite{Sch00}.
Denoting the parameter by $\kappa > 0$, these random growth processes (which are abbreviated as SLE$_\kappa$) have provided a valuable mathematical tool for obtaining rigorous results about a variety of discrete lattice models from statistical mechanics. Assuming appropriate boundary conditions, these discrete models contain a random curve which converges to SLE$_\kappa$  for some value of $\kappa$ (where $\kappa$ depends on the model being considered). For instance, the  exploration path
for critical site percolation on the triangular lattice converges to SLE with $\kappa=6$, contour lines in the discrete Gaussian free field converge to SLE with $\kappa=4$, a perimeter curve for the uniform spanning tree converges to SLE with $\kappa=8$, and cluster interfaces in the spin Ising and FK-Ising models at criticality converge to SLE with parameters $\kappa=3$ and $\kappa=16/3$, respectively. Moreover, loop-erased random walk converges to SLE$_2$ and there is strong evidence to suggest that self-avoiding walk converges to SLE$_{8/3}$. For more details
see \cite{SLEbook,ParkCity} and references therein.

In addition to studying SLE as the scaling limit of interfaces in discrete statistical mechanics models, it is of intrinsic interest to understand the path properties of SLE; that is, properties of SLE as a continuous random process in the complex plane.  For example,  one might ask about the Hausdorff dimension of the SLE trace or the dimension of various  random subsets of the trace, probabilities for different events such as the intersection of the trace with either a random or deterministic subset of the complex plane, or the distributions of certain functionals of the trace.

This paper studies the Green's function for radial SLE$_\kappa$, $0<\kappa<8$, from $1$ to $0$ in the unit disk $\Disk$ which is the  (normalized) probability that a radial SLE trace passes near a given point $z \in \Disk$.   The Green's function for chordal SLE$_\kappa$ from $0$ to $\infty$ in the upper half plane $\Half$, denoted by   $\cGF_{\Half}(z;0,\infty)$, is well understood.
It can be defined up to a muliplicative
constant by  the limit
\begin{equation}\label{chordalGFdefn}
\lim_{\eps\to0} \eps^{d-2}\Prob{\Upsilon_\infty(z) \le \eps} = c^* \cGF_{\Half}(z;0,\infty)
\end{equation}
where $d=1+\kappa/8$ is the Hausdorff dimension of the SLE trace $\gamma(0,\infty)$ and $c^*$ is a 
constant.
Here $\Upsilon_\infty(z)$ denotes
one-half times the conformal radius
of (the connected component containing $z$ of)
$\Half \setminus \gamma_\infty
$ with respect to $z$ and $\gamma_\infty$ denotes the trace $\gamma[0,\infty)$.
It follows from the Koebe one-quarter theorem
and Schwarz lemma that
\[  \frac 12 \, \Upsilon_\infty(z) \leq
\dist(z,\gamma_\infty \cup \R) \leq 2 \, \Upsilon_\infty
(z).\]  Furthermore, the exact values
of $c^*$ and $\cGF$ are known,
 see Section \ref{SectChordalGFReview}.
 (Of course, \eref{chordalGFdefn} only
 defines $c^*, \cGF_\Half$ up to a multiplicative
 constant, but there is a natural choice of
 the constant that makes $\cGF_\Half$ simplest.)
 In this paper, we consider the analogue for
 radial SLE$_\kappa$.

The outline of the remainder of the paper is as follows.  In Section~2.1 we review the basic facts about the Green's function for chordal SLE while in Section 2.2 we introduce the Green's function for radial SLE.  In Section~3 we derive the partial differential equation for the Green's function for radial SLE in $\Disk$ assuming that the Green's function exists. This PDE can be solved explicitly when $\kappa=4$ and so we state in Theorem~3.2 a formula for the  Green's function for radial SLE$_4$ from 1 to 0 in $\Disk$, namely
\[  G_{\Disk}(z;1) = \sqrt{\frac{1-|z|^2}{|z|\cdot|1-z|^2}}\]
for $z \in \Disk$.
 In order to study the radial Green's function for other values of $\kappa$ we work in the
 half-infinite cylinder $\Half^*$ obtained by taking the upper half plane $\Half$ and identifying points $w_1$, $w_2$ such that $w_1 -w_2 = k\pi$ for some integer $k$. The details are given in Section~4, and we state the
 main theorem,   Theorem~\ref{main_thm},
 which gives a representation of the radial SLE$_\kappa$ Green's function in $\Half^*$ for all $\kappa<8$ in terms of an expectation with respect to SLE conditioned to go though a point. Finally, in Section~5 we prove the theorem by
 showing that the corresponding limit in~\eref{chordalGFdefn} exists in the radial
 case.

\section{The Green's function for SLE}

\subsection{Chordal SLE Green's function}\label{SectChordalGFReview}

Chordal SLE$_\kappa$ for $\kappa>0$ is a probability
measure on curves $\gamma$
connecting distinct boundary points of a domain $D$.  It is characterized
by conformal invariance and the domain Markov property.  In the upper
half plane it can be obtained by considering the Loewner equation
\begin{equation} \label{chordalL}
   \partial_t g_t(z) = \frac{a}{g_t(z) - B_t} , \;\;\;\; g_0(z) = z,
   \end{equation}
where $a=2/\kappa$ and $B_t$ is a standard one-dimensional Brownian motion.  This equation
is valid for all $t < T_z$ and the curve $\gamma$ can be described
by saying that $\{z: T_z > t\}$ is the unbounded component
of $\Half \setminus \gamma_t$ where $\gamma_t = \gamma[0,t]$.
It also satisfies $g_t(\gamma(t)) = B_t$.  It is known that for $0<\kappa \le 4$ the curves are simple, while for $d \geq 8$, the
curves are plane filling.  In this paper, we restrict to $\kappa < 8$.

Suppose  $d$ is the fractal dimension of the curve $\gamma$.  Then we would expect
that there is a function $\cGF(z)$ such that
\[
     \Prob{\dist(z,\gamma_\infty) \leq \epsilon} \sim \cGF(z) \, \epsilon^{2-d} \;\;\;
{\rm as} \;\;\;  \epsilon \downarrow 0.
\]
 Rohde and Schramm~\cite{rohde_schramm} first noticed that if such a $\cGF$ existed
then
\[\overline{M}_t(z) = |g_t'(z)|^{2-d} \, \cGF(g_t(z) - B_t)\]
 would have to be a local martingale.
Using  the scaling rule $\cGF(rz) = r^{d-2} \, \cGF(z)$ along with It\^o's formula, they
showed that this implies (up to a multiplicative constant) that
\[             \cGF(z) = [\Im(z)]^{d-2} \, \sin^{\frac 8 \kappa - 1}(\arg z)\]
where $d = 1 +  \kappa/ 8$.
Using this idea Beffara~\cite{Bef} (see also~\cite{LawWerGF}) proved that the
Hausdroff dimension of the paths is $d$.  It is still not known whether or not
$ \Prob{\dist(z,\gamma_\infty) \leq \epsilon } \sim c\,\cGF(z) \, \epsilon^{2-d}$ for
some $c$, but a similar result is known which we now describe. 
Let $\Upsilon_D(z)$ denote one-half the conformal radius of $z$ in 
$D$.  For simply
connected domains $D$,  which is all we
will need,  this means $\Upsilon_\Half(z)  =
\Im(z)$ and $\Upsilon$ transforms under conformal transformations by
$\Upsilon_{f(D)}(f(z)) = |f'(z)| \, \Upsilon_D(z)$.
 Using the Schwarz lemma and
the Koebe one-quarter theorem, one can see that
\[              \frac 12 \, \dist(z,\partial D) \leq \Upsilon_D(z) \leq 2 \
  \dist(z,\partial D).
\]
If $D$ is not connected, we write $\Upsilon_D(z)$
for the conformal radius of $z$ in the connected
component of $D$ containing $z$.

The Green's function for chordal SLE$_\kappa$ in a simply connected domain $D$ connecting
boundary points $w_1$, $w_2$ is defined by
\[    \cGF_D(z;w_1,w_2) = \Upsilon_D(z)^{d-2} \, S_D(z;w_1,w_2)^{\frac 8 \kappa - 1}. \]
Here $S_D(z;w_1,w_2) = \sin [\arg F(z)]$ where $F:D \rightarrow \Half$ is a conformal
transformation with $F(w_1) = 0$, $F(w_2) = \infty$.
 Note that $\cGF(z) = \cGF_{\Half}(z;0,\infty)$.
Moreover, if $\gamma$ is an SLE curve from $w_1$ to $w_2$ in $D$,
\begin{equation}  \label{may15.1}
\lim_{\epsilon \downarrow 0} \epsilon^{d-2} \, \Prob{\Upsilon_{D \setminus \gamma_\infty}(z)
 \leq \epsilon } = c^* \, \cGF_D(z;w_1,w_2),
  \end{equation}
where
\begin{equation}  \label{defncstar}
 c^* =  2 \, \left[\int_0^\pi \sin^{ 8 /\kappa} \, x
 \di x \right]^{-1} .
 \end{equation}
A proof of this can be found
in  Lemma~2.10 of~\cite{LawWerGF}; we will review the proof in Section \ref{SectMainThm}.

The chordal Green's function satisfies the scaling rule
\[                \cGF_D(z;w_1,w_2) = |f'(z)|^{2-d} \, \cGF_{f(D)}
    (f(z);f(w_1),f(w_2)) . \]
Moreover,
\[   \overline{M}^D_t(z) := \cGF_{D \setminus \gamma_t}(z;\gamma(t),w_2) \]
is a local martingale.

\subsection{Radial SLE Green's function}

Recall that radial SLE$_\kappa$ for $\kappa>0$ is a probability measure on curves connecting a boundary point and
an interior point.  For ease, and without loss of generality, we will choose the
interior point to be the origin.  If $D$ is a simply connected domain containing
the origin and $w \in \partial D$,
we define $G_D(z;w)$, the {\em Green's function for radial SLE$_\kappa$}
for $\kappa < 8$, by
\begin{equation}  \label{may15.2}
  \lim_{\epsilon \downarrow 0} \epsilon^{d-2}\,  \Prob{\Upsilon_{D \setminus \gamma_\infty}(z) \leq \epsilon} = c^*G_D(z;w)
\end{equation}
where $\gamma$ is a radial SLE$_\kappa$ path from $w$ to $0$ and $c^*$ is
the same constant as in~\eref{defncstar}.  In 
Theorem~\ref{main_thm}  we establish
the existence of a function
satisfying~\eref{may15.2}.  It suffices to
prove existence for $D = \Disk$, $w = 1$.  Indeed
if~\eref{may15.2} holds in this case, and
$f: D \rightarrow \Disk$ is a conformal transformation
with $f(0) = 0$, $f(w) = 1$, then conformal
invariance shows that~\eref{may15.1} holds with
\[         G_D(z;w) = |f'(0)|^{2-d} \, G_\Disk(f(z);1).\]
As in the chordal case, we note that if such a function exists, then
\[     M_t^D(z) := G_{D \setminus \gamma_t}(z;\gamma(t)) \]
is a local martingale. Indeed, we could have
used this property as the definition (up to a multiplicative constant) of the Green's function.
  If $G$ is sufficiently smooth, see Sections~\ref{SectRadialGFinDisk.nowsect} and~\ref{SectRadSLEinHstar}, 
It\^o's formula  gives
a partial differential equation for $G$.  
In Section~\ref{SectMainThm}
we show that~\eref{may15.2} holds.  We summarize
the results here.

  We
write $\Half^*$ for the half-infinite cylinder obtained by taking the upper half
plane $\Half$ and identifying points $w_1$, $w_2$ such that $w_1 - w_2 = k \pi$ for
some integer $k$. Let $\psi(z) = e^{2iz}$
which is a conformal transformation of
 $\Half^*$ onto $\Disk \setminus \{0\}$.  Let
 $\phi(z) = (z-i)/(z+i)$ which is a conformal transformation of $\Half$ onto $\Disk$. Let
\begin{equation}  \label{jun5.1}
  f(z) = \psi^{-1} \circ \phi (z) =
        \frac{1}{2i} \, \log\left[\frac{z-i}{z+i}
        \right]
        \end{equation}
and set $\Disk^+_r = \{z \in \Half: |z| < r\}$ so that
$f$ is a conformal transformation of $\Disk^+_1$
onto a domain $f(\Disk^+_1)$ with  $f(0) = 0$, $f'(0) = 1$.
We write $G(z) = G_{\Half^*}(z;0)$ for the radial SLE Green's function in $\Half^*$ which satisfies
\begin{equation}\label{GFinHstar}
G(z) = G (x+iy) =  2^{2-d} \, e^{-2(2-d)y} \, G_{\Disk}(e^{-2y + i2x};1).
\end{equation}
In analogy with the chordal case, we will write
\[            G_{D}(z;w) = \Upsilon_D(z)^{d-2} \, F_D(z;w) , \]
where $F_D(z;w)$ is a conformal {\em invariant}.  For chordal SLE$_\kappa$,
invariance under $z \mapsto rz$ showed that $ r^{2-d} \, \cGF(rz)$ is
a function of $\arg z$, and hence the solution could be found from a
one-variable differential equation.  It is a little more complicated
in the radial case.  We will write
\[       G(x+iy) = \Upsilon_{\Half^*}(x+iy)^{d-2} \, F(z) =
     [\sinh y \, \cosh y]^{d-2} \, F(z) .
\]
The second equality can be obtained by conformal transformation, using
$\Upsilon_\Disk(z) = (1-|z|^2)/2$.  There is no scaling relation
that allows $F$ to be written as a function of one real variable.  At
the moment, except in the case $\kappa = 4$, we do not have an
explicit expression for $G$.  However, we can write
\begin{equation}\label{FormulaForGFinHstar}
        G(z) = G(x+iy) =  [\sinh y \, \cosh y]^{d-2}\, \Lambda(z)^{\frac 8 \kappa - 1}  \, \Phi(z),
\end{equation}
where $\Lambda(z)$ is an analogue of $S(z) = \sin(\arg z)$, namely
\begin{equation}\label{defnlambda}   \Lambda(z) = \Lambda (x+iy) = \frac{\sinh y \, \cosh y}{|\sin z|}, \end{equation}
and $\Phi(z)$ is a correction term.  
We can write $\Phi$ as 
\begin{equation}\label{defnq}
\Phi(z) = \E^*\left[g_T'(0)^{q}
\right], \; {\rm where} \;\; q = \frac{(4-\kappa)(\kappa - 8)}
{8 \kappa}.
\end{equation}
We quickly explain the notation above;
see Section~\ref{SectRadSLEinHstar} for more
detail.
  Suppose
$\gamma$ is a radial SLE$_\kappa$ path from $1$ to the origin.  Let $\E^* = \E^*_w$
denote  expectation with
respect to the probability
measure  obtained by conditioning
the path to go through the point $w = e^{2iz}$. See~\cite{LawlerMultiFractAnalysis} for an explanation of how this is made precise using the Girsanov theorem to tilt (or weight) by a particular local martingale.  Let
$T$ be the time at which this path reaches $w$,
and then  $g_T$ is the conformal transformation of the
component of $\Disk \setminus \gamma_T$ containing
the origin to $\Disk$ with $g_T(0) = 0$, $g_T'(0)> 0$.

Note that $\Phi \equiv 1$ if $\kappa = 4$.   If $z = x+iy
\in \Half$ with $|x| \leq \pi/2$ and $|y| \leq 1$, then $G(z) \asymp \cGF(z)$,
where on the left we write $z$ for the corresponding point
on the cylinder $\Half^*$.  Moreover,
$G(z) \sim \cGF(z)$ as $z \rightarrow 0$.

\section{The PDE for the Green's function for radial SLE in $\Disk$}\label{SectRadialGFinDisk.nowsect}

For now and the rest of the paper we fix
 $\kappa < 8$ and write
\[             a = \frac 2 \kappa > \frac 14 .\]

Suppose that $\gamma:[0,\infty) \to \Disk \setminus \{0\}$ is a radial SLE$_\kappa$ path with $\gamma(0)=1$ and $\gamma(t) \to 0$ as $t \to \infty$. To be specific, let $D_t$ denote the 
connected component of $\Disk \setminus
\gamma_t$ containing the origin,
and let $g_t: D_t \to \Disk$ be the conformal transformation with $g_t(0)=0$ and $g_t'(0)=e^{2at}$. Then $g_t(z)$ satisfies the differential equation
\[ \bd_t g_t(z) = 2a g_t(z) \frac{e^{i2B_t} + g_t(z)}{e^{i2B_t} - g_t(z)}, \;\;\; g_0(z) = z, \]
where $B_t$ is a standard one-dimensional Brownian motion.
Note that $g_t(\gamma(t)) = e^{i 2 B_t}$
and $g_t'(0) = e^{2at}$. Moreover, if $\tilde g_t(z) = e^{-i2B_t} g_t(z)$, then
\begin{equation}\label{RadialLE}
\bd_t \log g_t(z) = 4a\pi \SK_{\Disk}(\tilde g_t(z)),
\end{equation}
where $\SK_{\Disk}$ is the Schwarz kernel for $\Disk$  given by
\begin{equation}\label{SKforDisk}
\SK_{\Disk}(z)=
\SK_{\Disk}(z,1) = \frac{1}{2\pi}\frac{1+z}{1-z} = \frac{1}{2\pi} \frac{1-|z|^2}{|1-z|^2}  + \frac{i}{2\pi} \frac{z-\overline{z}}{|1-z|^2}.
\end{equation}
Note that $u_{\Disk}(z) = \Re [\SK_{\Disk}(z)]$ is the Poisson kernel for $\Disk$ and $v_{\Disk}(z) = \Im [\SK_{\Disk}(z)]$ is its harmonic conjugate.  Here, and in what follows, we write the derivative of the logarithm as a convenient shorthand, namely
\begin{equation}\label{logdefn}
\bd_t \log g_t(z) = \frac{\bd_t  g_t(z) }{g_t(z)}.
\end{equation}
Since locally we can take a logarithm of $g_t$, there is no difficulty taking a branch cut so that the left side of~\eref{logdefn} is well-defined. Consequently,
the stochastic differential equation for $\tilde g_t(z)$ is
\begin{equation}\label{RadialLESDE}
\dd \log \tilde g_t(z) = 4a\pi \SK_{\Disk}(\tilde g_t(z)) \di t- 2i \di B_t.
\end{equation}
Moreover, taking spatial derivatives (using $'$ to denote $\bd_z$) of~\eref{RadialLE}  implies that
\[ \bd_t \log g_t'(z) = 4a\pi \left[  \SK_{\Disk}(\tilde g_t(z)) + \tilde g_t(z) \SK_{\Disk}'(\tilde g_t(z)) \right],\]
\begin{equation}\label{DEforgtilde}
\bd_t \log |\tilde g_t'(z)| = 4a\pi\,
\left[ u_{\Disk}(\tilde g_t(z)) +   \Re \left[  \tilde g_t(z) \SK_{\Disk}'(\tilde g_t(z)) \right]\right] ,
\end{equation}
\begin{equation}\label{DEno2}
 \frac{\partial_t |\tilde g_t'(z)|^{2-d}}
 {(4a-1)\pi |\tilde g'_t(z)|^{2-d}}
  =  u_{\Disk}(\tilde g_t(z)) +   \Re \left[  \tilde g_t(z) \SK_{\Disk}'(\tilde g_t(z)) \right] .
\end{equation}
We write $G_\Disk(z) = G_\Disk(z;1)$.
We will find the PDE for $G_\Disk$
such that
  $M_t^{\Disk} := |\tilde g_t'(z)|^{2-d} G_{\Disk}(\tilde g_t(z))$ is a local martingale.
  Suppose that we write $z=re^{i\theta} \in \Disk$ and
\[\log \tilde g_t(z) = U_t + i V_t.\]
As a consequence of~\eref{RadialLESDE}, we find
\[\partial_t U_t = 4a \pi u_{\Disk}(\tilde g_t(z)),  \;\;\; \;
\dd V_t = 4a \pi v_{\Disk}(\tilde g_t(z)) \di t  - 2 \di B_t. \]
 Define $H$ by
$H(r,\theta) = r^{2-d}\, G_{\Disk}(re^{i\theta})$. Assuming sufficient smoothness on $H$ and
using It\^o's formula, we see that $M_t^\Disk$
is a local martingale if and only if $ H$ satisfies
 \begin{equation}\label{thePDEinDisk3}
  H_{\theta\theta} + aFH_{\theta} + aJ H_{r}  + \left(a-\frac{1}{4}\right) F_\theta H =0,
\end{equation}
where
\[F(r,\theta) =  \frac{2r \sin \theta}{1+r^2-2r\cos \theta}, \;\;\;\;
 J(r,\theta) = \frac{r(1-r^2)}{1+r^2-2r\cos \theta}.\]
One might hope to
 find a  solution of~\eref{thePDEinDisk3}  the form
\begin{equation}\label{Hguessp}
H= H(r,\theta) = u_{\Disk}(r,\theta)^p = \left( \frac{1-r^2}{1+r^2-2r\cos \theta} \right)^{p}
\end{equation}
for some $p$.  If $H$ is of form \eref{Hguessp}
then
\begin{equation}\label{thePDEinDisk4}
 H_{\theta\theta} + aFH_{\theta} + aJ H_{r}  + \left(a-\frac{1}{4}\right) F_\theta H 
=(a-p)F H_\theta +
\left(ap-p+a-\frac{1}{4}\right)F_\theta H.
\end{equation}
The right side of~\eref{thePDEinDisk4} equals 0 only if
$ap-p+a-\frac{1}{4}=0$ and $a-p=0$. Since the only solution of this pair of equations is $p=a=1/2$, we conclude that $H(r,\theta)$ given by~\eref{Hguessp} satisfies~\eref{thePDEinDisk3} if and only if $p=a=1/2$. Since $a=1/2$ corresponds to $\kappa=4$ (in which case $2-d=1/2$) we have now found an explicit formula for the Green's function (up to multiplicative
constant) for radial SLE$_4$ from 1 to 0 in $\Disk$:
\[  G_{\Disk}(z) = \sqrt{\frac{1-|z|^2}{|z|\cdot|1-z|^2}}=  r^{-1/2} \left( \frac{1-r^2}{1+r^2-2r\cos \theta} \right)^{1/2},\]
where $z = re^{i\theta}.$

It is certainly true that $H(r,\theta)=u_{\Disk}(r,\theta)^p$ is not the only natural guess for the solution of~\eref{thePDEinDisk3}.  One other guess might be
$u_{\Disk}(r,\theta)^p v_{\Disk}((r,\theta))^q$ for some values of $p$, $q$. It turns out that this guess does not lead to anything useful for analysis. In fact, we will see in Section~\ref{SectRadSLEinHstar} that $\Half^*$ is the best coordinate system for analyzing the Green's function for radial SLE.

\section{Radial SLE$_\kappa$ in the cylinder $\Half^*$}\label{SectRadSLEinHstar}

In the previous section, we saw that it was easier to write the equation in polar coordinates.  We continue this here; we will write points in $\Disk$ as $e^{2iz}$ where $z \in \Half^*$.
Radial SLE$_\kappa$ in $\Half^*$ can be given as the solution to the
Loewner equation
\begin{equation}  \label{radialL}
\partial_t h_t(z) =  a \, \cot(h_t(z) + B_t) ,  \;\;\; h_0(z)=z,
\end{equation}
where $B_t$ is a standard one-dimensional Brownian motion.  If we now define $g_t(e^{2iz}) =
\exp\{2ih_t(z)\}$, then the maps $g_t$ are those of radial SLE$_\kappa$ in
$\Disk$ with the parametrization chosen so that $g_t'(0) = e^{2at}$.
We write $\tilde h_t(z) = h_t(z) + B_t$, so that
\begin{equation}\label{RadialLEinHstar}
\dd\tilde h_t(z) = a \, \cot(\tilde h_t(z)) \di t + \dd B_t.
\end{equation}
By taking the spatial derivative of~\eref{RadialLEinHstar}
we obtain
\begin{equation}\label{eqnforhprime}
\tilde h_t'(z)  = \exp \left\{-a \int_0^t \csc^2 \tilde h_s(z) \di s \right\},
\end{equation}
and so
\[ \partial_t |\tilde h_t'(z)|^{2-d}
= \left(\frac{1}{4} - a\right) \rho(z)  |\tilde h_t'(z)|^{2-d} \]
where
$\rho(z) = \rho(x,y)$
is given by
\begin{equation}\label{defnrhoHstar}
\rho(x,y)= \Re(\csc^2 z) = \frac{\sin^2 x \cosh^2 y - \cos^2x \sinh^2y}{(\sin^2 x + \sinh^2 y)^2}  .
\end{equation}
If we fix $z$ and write
\[   Z_t = \tilde h_t(z) = X_t + i Y_t,\]
then $X_t=X_t(z)$ and $Y_t=Y_t(z)$ satisfy
\begin{equation}  \label{hstar}
 \dd X_t = a\, v_t  \di t + \dd B_t ,
 \; \;\;\; \partial_t Y_t = - a \, u_t  ,
\end{equation}
where
\[u_t = u(X_t,Y_t)= \frac{\sinh Y_t \cosh Y_t}  {|\sin Z_t |^2}, \;\;\;\; v_t= v(X_t,Y_t)=\frac{\sin X_t \cos X_t } {|\sin Z_t|^2}.\]
Furthermore, let $u(z) = u_0$ and $v(z) = v_0$ so that
\begin{equation}\label{defnuvHstar}
 u(z) = u(x,y) =\frac{\sinh y \cosh y} {\sin^2 x + \sinh^2 y}, \;\;\;\;
v(z) = v(x,y)  =\frac{\sin x\cos x}{\sin^2 x + \sinh^2 y}.
\end{equation}
Note that $\rho(x,y) = -v_x(x,y)$. Recall from~\eref{GFinHstar}  that if $
G_\Disk(z) = G_{\Disk}(z;1)$ denotes the Green's function for radial SLE$_\kappa$ in $\Disk$, then the Green's function for radial SLE$_\kappa$ in $\Half^*$ is
\[G(z) = G(x+iy) = 2^{2-d}e^{-2(2-d)y}G_{\Disk}(e^{-2y+i2x}).\]
 It\^o's formula (at $t=0$) tells us that if $G(z) = G(x,y)$ satisfies the partial differential equation
\begin{equation}\label{thePDEinHstar1}
\frac{1}{2}G_{xx} + a vG_x- a uG_y +  \left(\frac{1}{4} - a\right) \rho G = 0,
\end{equation}
then $M_t = M_t(z)$ defined by
\begin{equation}\label{martingaleM}
M_t =   |\tilde h_t'(z)|^{2-d}G(\tilde h_t(z))
\end{equation}
will be a local martingale.
  Even though~\eref{thePDEinHstar1} is just a coordinate transformation of~\eref{thePDEinDisk3}, it   is easier to analyze.

\begin{lemma}\label{LemHinHstar}
If  $p = (4a-1)-2(2-d)$, $\zeta = (4a-1)-(2-d)$,
and the function $H=H(z)=H(x,y)$ is defined as
$$H(z) = |\sin z|^p \,u(z)^\zeta$$
where $u$ is given by~\eref{defnuvHstar},
then $H$ satisfies the differential equation
\begin{equation}\label{PDEforHinHstar}
 \frac{1}{2}H_{xx} + a vH_x -auH_y+  \left(\frac{1}{4} - a\right) \rho H +ap H =0
\end{equation}
where
$v$ is given by~\eref{defnuvHstar} and $\rho=-v_x$ as in~\eref{defnrhoHstar}.
In particular,
\begin{equation}\label{martingaleN}
N_t = N_t(z) =  e^{ap t} |\tilde h_t'(z)|^{2-d}H(\tilde h_t(z))
\end{equation}
is a local martingale satisfying
$\dd N_t = (1-4a)\, v_t N_t\di B_t$.
\end{lemma}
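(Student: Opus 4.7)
The plan is to verify the PDE \eref{PDEforHinHstar} directly, then obtain the martingale statement from It\^o's formula exactly as was done in the derivation of~\eref{thePDEinHstar1}. The only real content is the PDE check, so I would focus the algebraic work there and rely on factorizations that exploit the complex structure in the background.

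The key organizing observation is that $\cot z = v(z) - i u(z)$, which can be checked by a direct expansion of $\cos z/\sin z$ in terms of $x,y$. Holomorphicity of $\cot z$ then yields the Cauchy--Riemann relations $v_x = -u_y$ and $v_y = u_x$, and in particular $\rho = -v_x = u_y$. Writing $\ell = \log|\sin z|$ (harmonic off the zeros of $\sin z$), I get $\ell_x = v$ and $\ell_y = u$, and a further differentiation gives $u_x/u = (\log u)_x = -2v$ from $\log u = \log(\sinh y\cosh y) - 2\ell$. The payoff is a clean logarithmic derivative
\[
\frac{H_x}{H} = p\,\ell_x + \zeta\,(\log u)_x = (p-2\zeta)\,v = -(4a-1)\,v,
\]
using $p - 2\zeta = -(4a-1)$, which is a direct consequence of the definitions of $p$ and $\zeta$. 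Similarly, $H_y/H = (1-4a)u + 2\zeta \coth(2y)$.

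With these formulas, $H_{xx}/H = (1-4a)v_x + (1-4a)^2 v^2$ is immediate from $(\log H)_{xx} = (1-4a)v_x$. Plugging $H_x, H_y, H_{xx}$ into \eref{PDEforHinHstar} and dividing by $H$ reduces the identity to a polynomial relation in $u$, $v$, $\rho$, and $\coth(2y)$. I would collect coefficients in three groups. The $v_x$ contribution combines with the $(1/4 - a)\rho$ term using $\rho = -v_x$. The $u^2$ and $v^2$ terms turn out to have the same coefficient $(4a-1)(2a-1)/2$, and the elementary identity $v^2 - u^2 + 1 = \rho$ (coming from $\cot^2 z + 1 = \csc^2 z$ and the formula for $\rho$) lets me convert $u^2+v^2$ into $2u^2 - 1 + \rho$. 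After one more substitution $\rho = u_y = 2u\coth(2y) - 2u^2$ (from the $y$-derivative of $\log u$), the equation collapses to two scalar conditions: a constant piece $ap = (4a-1)(2a-1)/2$ and a $u\coth(2y)$ piece $a\zeta = (4a-1)^2/4$. Using $2-d = (4a-1)/(4a)$, both identities follow from the definitions of $p$ and $\zeta$, completing the verification.

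Given the PDE, the martingale claim is mechanical. From \eref{hstar}, $d\tilde h_t(z)$ has drift $(av_t, -au_t)$ and quadratic variation $dt$ in the real direction, while $|\tilde h_t'(z)|^{2-d}$ is absolutely continuous with $\partial_t |\tilde h_t'(z)|^{2-d} = (1/4 - a)\rho_t |\tilde h_t'(z)|^{2-d}$ by the discussion preceding~\eref{thePDEinHstar1}. Applying It\^o to $H(\tilde h_t)$ and the ordinary product rule to $e^{apt}|\tilde h_t'(z)|^{2-d}H(\tilde h_t)$, the drift is
\[
e^{apt}|\tilde h_t'(z)|^{2-d}\Bigl[apH + (1/4-a)\rho_t H + \tfrac{1}{2}H_{xx} + av_t H_x - au_t H_y\Bigr]\,dt,
\]
which vanishes by~\eref{PDEforHinHstar}. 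The surviving stochastic term is $e^{apt}|\tilde h_t'(z)|^{2-d} H_x\,dB_t = N_t (H_x/H)\,dB_t = (1-4a)v_t N_t\,dB_t$, as claimed.

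The main obstacle is the polynomial algebra in the PDE verification; the risk is a sign error when combining the $\rho$ and $v^2$ contributions. Working in terms of $\ell = \log|\sin z|$ and the analytic function $\cot z$ keeps the bookkeeping tractable, and the identity $v^2 - u^2 + 1 = \rho$ is the one non-obvious ingredient that makes the $u^2, v^2$ and $\rho$ coefficients collapse cleanly.
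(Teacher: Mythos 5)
Your proposal follows the same route the paper takes (the paper simply asserts that the lemma is "a straightforward, although tedious, computation using It\^o's formula"), and your organization of that computation is sound: the identities $\cot z = v-iu$, $\ell_x=v$, $\ell_y=u$, $(\log u)_x=-2v$, $(\log u)_y = 2\coth(2y)-2u$, $\rho=-v_x=u_y$, $v^2-u^2+1=\rho$, together with $p-2\zeta=-(4a-1)$, $ap=(4a-1)(2a-1)/2$ and $a\zeta=(4a-1)^2/4$, are all correct, and the It\^o/product-rule step yielding $\dd N_t=(1-4a)v_tN_t\,\dd B_t$ is exactly right.

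There is, however, one concrete bookkeeping error in the PDE verification: the $u^2$ and $v^2$ terms do \emph{not} have the same coefficient. Dividing \eqref{PDEforHinHstar} by $H$ and using $H_x/H=(1-4a)v$, $H_{xx}/H=(1-4a)v_x+(1-4a)^2v^2$, $H_y/H=(1-4a)u+2\zeta\coth(2y)$, the $v^2$ coefficient is $\tfrac12(4a-1)^2-a(4a-1)=\tfrac{(4a-1)(2a-1)}{2}$, while the $u^2$ coefficient, which comes only from $-auH_y$, is $a(4a-1)$; these differ by $\tfrac{4a-1}{2}$. If you follow your sketch literally and replace $u^2+v^2$ by $2u^2-1+\rho$ with the common factor $\tfrac{(4a-1)(2a-1)}{2}$, you are left with an uncancelled term $-\tfrac{4a-1}{2}u^2$ and the verification appears to fail. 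The fix is simply to substitute $v^2=u^2-1+\rho$ with the $v^2$ coefficient only: then the expression becomes $\tfrac{(4a-1)^2}{4}\rho+\tfrac{(4a-1)^2}{2}u^2-2a\zeta\,u\coth(2y)+\bigl(ap-\tfrac{(4a-1)(2a-1)}{2}\bigr)$, and after the substitution $\rho=2u\coth(2y)-2u^2$ the $u^2$ terms cancel identically and you are left with precisely the two scalar conditions you state, $ap=\tfrac{(4a-1)(2a-1)}{2}$ and $a\zeta=\tfrac{(4a-1)^2}{4}$, both of which hold since $2-d=\tfrac{4a-1}{4a}$. With that one coefficient corrected, your argument is complete and is a legitimate filling-in of the computation the paper omits.
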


\begin{proof}
This is a straightforward, although tedious,
computation using It\^o's formula.
\end{proof}

We will now write the local martingale $N_t$ 
 in a way which clearly indicates the analogy with the chordal case.
Let $p=(4a-1)-2(2-d)$ be as in the lemma and let $q$ be as in~\eref{defnq}. Set
\[\beta = ap =  -2aq=
  \frac{(2a-1)\, (4a-1)}{2}
= \frac{(4-\kappa) \, (8-\kappa)}{2\kappa^2},\]
and set
\begin{equation}\label{DefnUpsilonLambda}
 \cradt_t =  \Upsilon_{\Half^* \setminus \gamma_t}(z)
=  \frac{\sinh Y_t \cosh Y_t}{|\tilde h_t'(z)|} ,
\end{equation}
\begin{equation}\label{DefnUpsilonLambda2}
\Lambda_t  =\Lambda_t(z)
 = |\sin Z_t| \, u_t =  \frac{\sinh Y_t \cosh Y_t}{|\sin Z_t|},
 \end{equation}
so that
\begin{equation}\label{locmgN}
N_t = N_t(z) = e^{ap t} |\tilde h_t'(z)|^{2-d}H(\tilde h_t(z)) = e^{\beta t}\, \cradt_t^{d-2}  \, \Lambda_t^{4a-1}
\end{equation}
is a local martingale satisfying
\[    \dd N_t = (1-4a) \, v_t\, N_t \di B_t. \]
This is not a martingale because it ``blows up'' on the (measure zero) set of paths
which reach $z$. This local martingale also appears in \cite{KMak}.

We now use the Girsanov theorem to consider a new measure $\Pr^*$
on paths tilted by $N_t$.
(See~\cite{LawlerMultiFractAnalysis} for an
 explanation of what is meant by using the Girsanov theorem to tilt (or weight) by a local martingale.)
The Girsanov theorem states that
\[   \dd B_t = (1-4a) \, v_t \di t + \dd W_t, \]
where $W_t$ is a standard Brownian motion with respect to the new measure $\Pr^*$.
In particular,
\begin{equation}  \label{hstarweight}
  \dd X_t = (1-3a)\, v_t \di t + \dd W_t,
  \;\;\;\;  \partial_t Y_t = - a \, u_t  .
\end{equation}
Finally, recalling that $g_t'(0) = e^{2at}$,
let
\begin{equation}  \label{defnPhi}
 \Phi(z) = \E^*\left[g_T'(0)^{q}
  \right] =\E^*[e^{2aqT}]
    =\E^*[e^{-\beta T}],
    \end{equation}
where
\[ T=  T(z)  = \inf\{t: Z_t(z) = 0 \}, \]
and $\E^*$ denotes expectation with respect to $\Pr^*$. The next lemma shows that $\Phi$ is well defined.

\begin{lemma} \label{radGFlem2}
If $z \in \Half^*$, then $\Pr^*\{T < \infty\} =1$ and $\Phi(z) < \infty$.
Moreover,
\[           \lim_{z \rightarrow 0} \Phi(z) = 1.\]
\end{lemma}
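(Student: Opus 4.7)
The plan is to analyze the SDE \eqref{hstarweight} governing $(X_t,Y_t)$ under $\Pr^*$, exploiting that $Y_t$ is deterministic given the path of $X$ and that $\partial_tY_t=-au_t\leq 0$. I will establish the three assertions in order.

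\emph{Finiteness of $T$.} The monotone limit $Y_\infty=\lim_{t\to\infty}Y_t\in[0,y]$ exists. Using $|\sin Z_t|^2=\sin^2X_t+\sinh^2Y_t\leq\cosh^2Y_t$, one has $u_t\geq\tanh Y_t$, so $Y_\infty>0$ would force $y-Y_\infty=a\int_0^\infty u_s\,ds=\infty$; hence $Y_\infty=0$. To upgrade this to $T<\infty$ I would work locally near $\{Z=0\}$. For fixed $\epsilon>0$ and $\tau_\epsilon=\inf\{t:|Z_t|\leq\epsilon\}$, on $[0,\tau_\epsilon]$ the Girsanov density $N_t/N_0$ is bounded above and below, so standard SLE hitting estimates under $\Pr$ transfer to $\Pr^*$ and give $\Pr^*\{\tau_\epsilon<\infty\}=1$. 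Inside $\{|Z|\leq\epsilon\}$ I would set $R_t=|Z_t|$; It\^o applied to \eqref{hstarweight} produces an SDE for $\log R_t$ whose drift is strictly negative on average, because the monotone decay of $Y_t$ dominates the $X$-dynamics, so $R_t$ reaches $0$ in finite time with positive probability. Strong Markov together with a $0$--$1$ argument upgrade this to probability one.

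\emph{Finiteness of $\Phi$.} When $\kappa\leq 4$ we have $\beta=(4-\kappa)(8-\kappa)/(2\kappa^2)\geq 0$, so $e^{-\beta T}\leq 1$ and $\Phi(z)\leq 1$. When $4<\kappa<8$, $\beta<0$, and we must show $\E^*[e^{|\beta|T}]<\infty$. The Bessel comparison can be made quantitative: by shrinking $\epsilon$ the rate of decay of $R_t$ inside $\{|Z|\leq\epsilon\}$ becomes arbitrarily fast, while recurrence of the angular coordinate of $Z_t$ gives geometric tails for the time spent outside this neighborhood. Combining yields $\Pr^*\{T>s\}\leq Ce^{-\lambda s}$ for a $\lambda$ chosen larger than $|\beta|$, which integrates to $\Phi(z)<\infty$.

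\emph{The limit $z\to 0$ and principal obstacle.} As $z\to 0$ the starting point $Z_0=z$ is close to the target, so the hitting analysis shows $T\to 0$ in $\Pr^*$-probability, and together with the uniform tail bound the dominated convergence theorem yields $\E^*[e^{-\beta T}]\to 1$. The main obstacle throughout is the Bessel-type comparison for $R_t$ under $\Pr^*$, particularly in the regime $\kappa>6$ where $1-3a<0$ so the $X$-drift actively repels the process from the lattice $\pi\mathbb{Z}$ we must reach. One must show that the monotone decay of $Y_t$ nevertheless dominates this repulsion to produce attraction to $\{Z=0\}$, and moreover quantify this finely enough to yield exponential moments of $T$ rather than merely almost-sure finiteness.
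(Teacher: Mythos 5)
Your overall architecture (treat $\kappa\le 4$ trivially since $\beta\ge 0$, prove exponential moments of $T$ under $\Pr^*$ for $4<\kappa<8$, then pass to the limit $z\to 0$ by uniform integrability) matches the paper's, but two of your key steps have genuine gaps. First, the claim that on $[0,\tau_\epsilon]$ the density $N_t/N_0$ is bounded above and below is unjustified and cannot be right as stated: $N_t=e^{\beta t}\cradt_t^{\,d-2}\Lambda_t^{4a-1}$ involves the unbounded factor $e^{\beta t}$ and, since $Y_t\downarrow 0$ even when the curve stays far from $z$, the factors $\cradt_t^{\,d-2}$ and $\Lambda_t^{4a-1}$ blow up and vanish respectively; moreover, if the density really were bounded above and below up to $\tau_\epsilon$, then $\Pr$ and $\Pr^*$ would be equivalent on $\mathcal{F}_{\tau_\epsilon}$, and since for $\kappa\le 4$ one has $\Pr\{\tau_\epsilon=\infty\}>0$, this would contradict the very conclusion $\Pr^*\{\tau_\epsilon<\infty\}=1$ you are after. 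The workable comparison is local, not global: near the origin the tilted equations \eref{hstarweight} agree, up to uniformly small corrections, with the two-sided radial equations \eref{chord2}, whose almost sure finite hitting time is known; combined with the deterministic bound $\partial_t Y_t\le -a\tanh Y_t$ (which brings any starting point into a small neighborhood of $0$ in bounded time) and the Markov property, this is how the paper gets both $\Pr^*\{T<\infty\}=1$ and the small-$|z|$ estimates.

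Second, and more seriously, the inequality $\Pr^*\{T>s\}\le Ce^{-\lambda s}$ with $\lambda>|\beta|$ is exactly the hard quantitative point, and you assert it rather than prove it: the exponential rate is dictated by the dynamics and cannot be ``chosen'' larger than $|\beta|$, nor does it improve by shrinking $\epsilon$ (the tail of $T$ is governed by the excursions during which $X_t$ is away from $\pi\mathbb{Z}$ and $|Z_t|$ may leave the small neighborhood, not by the speed of approach inside it). The paper closes this by bounding the return time $\sigma=\inf\{t:X_t=0\}$ via comparison with $\dd R_t=r\cot R_t\,\dd t+\dd B_t$, $r=\max\{0,1-3a\}$, whose hitting time of $0$ has exponential moments up to the principal eigenvalue $\lambda=\tfrac12-r$ (explicit eigenfunction $(\sin x)^{1-2r}$), and then verifying algebraically that $\xi=-\beta<\tfrac12-r$ for all $4<\kappa<8$; an excursion decomposition then yields $\E^*[e^{\xi T}]\le 1+r$ for $|z|$ small and, via the Markov property and the deterministic decay of $Y_t$, finiteness of $\Phi$ for all $z$. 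Without some version of this spectral comparison your argument does not close, and your $z\to 0$ step inherits the same gap, since dominated convergence for $\kappa>4$ requires an exponential moment bound with rate exceeding $|\beta|$ that is uniform over small $|z|$. (Minor: for $\kappa>6$ one has $a<1/3$, so $1-3a>0$; the drift does repel $X$ from $\pi\mathbb{Z}$ in that regime, but your stated sign is reversed.)
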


\begin{proof}
First we show the following estimate which is valid for all $\kappa < 8:$ for every $\epsilon > 0$,
 there exists $\delta > 0$ such that if $|z| \leq \delta$, then
 $\Pr^*\{T > \epsilon\} \leq \epsilon$. To see this note that if $|Z_t|$ is small, then
\[   v_t = \frac{X_t}{X_t^2 + Y_t^2} \, [1 + O(|Z_t|^2)], \;\;\; 
\; u_t = \frac{Y_t}{X_t^2 + Y_t^2} \, [1 + O(|Z_t|^2)].\]
Hence the Loewner equation~\eref{hstarweight} near $0$ looks like
\begin{equation}  \label{chord2}
  \dd X_t = (1-3a) \, \frac{X_t}{X_t^2 + Y_t^2} \di t
    + \dd B_t ,\;
\;\;\; \partial_t  Y_t = - a \,\frac{Y_t}{X_t^2 + Y_t^2} .
    \end{equation}
 These are the equations for two-sided radial SLE$_\kappa$ (see, e.g., \cite{LawWerGF} for definition), and it
 is  known that this reaches the origin in finite time.  A scaling argument  gives the
 corresponding result for two-sided radial, and
 we can compare to get our estimate. In the $\beta \ge 0$ case, i.e., $\kappa \leq 4$, this immediately shows that $\Phi(z) \to 1$ as $z \to 0$. We also trivially have $\Phi(z) < 1$ in this case, so the lemma is complete for $\beta > 0$. For the remainder we assume that
 $\xi = -\beta > 0$, i.e., $4 < \kappa < 8$.

 We return to the equations~\eref{hstarweight} and let
 \[  \sigma = \inf\{t:X_t = 0\}. \]
 We claim there exists $c_1$ such that
$\E^*[e^{\xi \sigma}]  \leq c_1 $ for all $z$.  To see this, we note that $X_t$ reaches zero
  before $R_t$ where $R_t$ satisfies
    \[   \dd R_t = r \, \cot R_t \di t + \dd B_t, \;\;\;\;
   r = \max\{0,1-3a\},\]
since the latter equation has a larger drift away from the
origin.  Hence it suffices to show that $\E^*[e^{\xi \tau}]
< \infty$ where $R_0 = \pi/2$ and $\tau = \inf\{t: R_t = 0\}$.
This is true provided that $\xi < \lambda$ where $\lambda$ is
the smallest eigenvalue  of
\[       \frac 12 \, \phi''(x) + r \, \cot(x) \, \phi'(x)
 + \lambda \, \phi(x) =0. \]
 This eigenvalue corresponds to the positive eigenfunction $\phi$ and
 one can verify easily that
 \[      \phi(x) = [\sin x]^{1-2r} , \;\;\;\;
     \lambda =  \frac 12 - r > \xi .\]

We now use the previous paragraph for the argument that $\Phi(z) \to 1$ as $z \to 0$. We use it to show that for every $r >0$, there
  exists $\epsilon_r >0$
 such that if $|z| \leq \epsilon_r$
  then $\E^*[e^{\xi T}]
  \leq 1 + r$.  To see this, we consider excursions.
  Let $0 < \delta   \leq 1/2 $ with $e^{\xi \delta}
   + 2 \delta \leq 1 + r  $ and
  let $\tau_{-1} = 0$, $\rho_0=0$,
 \[   \tau_0 = T \wedge
  \inf\{t: t=\delta \mbox{ or } |Z_t| = 2 \delta \}, \]
 and if $\tau_0 < T$,
 \[ \rho_{j+1} = \inf\{t > \tau_j:X_t = 0 \}, \]
 \[ \tau_{j+1} = T \wedge \inf\{t > \rho_{j+1}: t = \rho_{j+1}
 + \delta \mbox{ or }  |Z_t| = 2 \delta \}. \]
 Then,
 \[   \E^*[e^{\xi T} \mid\tau_{j-1} \leq
  T < \tau_j] \leq e^{\xi \delta (j+1)}
  \, c_1^{j}. \]
 We can find $\epsilon > 0$ such that if $|z| < \epsilon$, then
 \[   \Pr^*\{\tau_0 < T\} \leq  \delta /(2c_1), \]
 and hence, since $Y_t$ decreases, for $\delta$
 sufficiently small, if $j \geq 1$, then
 \[  \E^*[e^{\xi T} ; \, \tau_{j-1} \leq
  T < \tau_j] = \Pr^*\{\tau_{j-1} \leq
  T < \tau_j\} \,  \E^*[e^{\xi T} \mid\tau_{j-1} \leq
  T < \tau_j] \leq \delta^j . \]
 Therefore,
  $\E^* \left[e^{\xi T}; \, T > \tau_0\right]
 \leq 2\, \delta$. Since $T \leq \delta$ on the event $\{T  \leq \tau_0\}$,
 we conclude that
 \[ \E^* \left[e^{\xi T}\right] \leq e^{\xi \delta} + 2 \delta
  \leq 1 + r ,\]
verifying that $\Phi(z) \to 1$ as $z \to 0$.

  Finally we use the last estimate to show that $\Phi(z) < \infty$ for all $z$. Combining the estimates of the last two paragraphs
  with the Markov property, we see that
  if $\Im(z) \leq \epsilon_r$, then $\E^*[e^{\xi T}] \leq
   (1+r) \, c_1$.  Using the deterministic estimate
   \[    - a \, \coth Y_t \leq \partial_t Y_t \leq -
   a \, \tanh Y_t,\] we see that if $z = x+iy$, then
   \[    \cosh^{-1} [(\cosh y) e^{-at}]
   \leq   Y_t \leq \sinh^{-1} [(\sinh y) e^{-at}]. \]
  In particular, there exists a deterministic function $\psi(y)$ such that
  $\inf\{t: Y_t = \epsilon_r\} \leq \psi(y)$ for $z = x+iy$, and hence
  \[   \Phi(x+iy) \leq e^{\beta \, \psi(y)} \, (1+r) \, c_1. \] 
This completes the proof. \end{proof}

We are now able to state the formula for the Green's function for radial SLE$_\kappa$ in $\Half^*$ as given by~\eref{FormulaForGFinHstar}.

\begin{theorem}\label{main_thm}
Suppose that $p = (4a-1)-(2-d)$, $\zeta = (4a-1)-2(2-d)$,  $\beta=ap$, and  $H(z) = |\sin z|^p u(z)^\zeta$ where $u(z)$ is given by~\eref{defnuvHstar}. The Green's function for radial SLE in $\Half^*$ is 
\[            G(z) = H(z) \, \Phi(z) , \]
where
\[           \Phi (z) = \E^*[e^{-\beta T}] \]
as in~\eref{defnPhi}.  In other words,
 If $z
\in \Half^*$, then for $\cradt_{\infty}(z)$ defined by~\eref{DefnUpsilonLambda},
\[   \lim_{\epsilon \downarrow 0}
\epsilon^{d-2}\,
 \Pr\{\cradt_{\infty}(z) \leq \epsilon\}
 = c^* \, G(z),   \;\;\; {\it  where} \;\;\;
 c^* =  2 \, \left[\int_0^\pi \sin^{ 8 /\kappa}  x
 \di x \right]^{-1}.
\]
If $\kappa = 4$, then $G(z) = H(z)$.
\end{theorem}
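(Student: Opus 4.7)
The plan is to adapt the argument of Lemma~2.10 of~\cite{LawWerGF} from the chordal to the radial setting, using the local martingale $N_t = e^{\beta t}\cradt_t^{d-2}\Lambda_t^{4a-1}$ from \eref{locmgN} in place of its chordal analogue. A direct calculation shows $N_0 = H(z)$, so the exponential weight $e^{\beta t}$ is the only new feature, and it will eventually produce the factor $\Phi(z) = \E^*[e^{-\beta T}]$. First, reduce $\Pr\{\cradt_\infty(z)\le\eps\}$ to $\Pr\{T_\eps<\infty\}$, where $T_\eps := \inf\{t\ge 0:\cradt_t(z)\le\eps\}$, using monotonicity of $t\mapsto\cradt_t(z)$ (so that $\cradt_{T_\eps}=\eps$ on $\{T_\eps<\infty\}$).

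The next step is to use the Girsanov transformation $d\Pr^*/d\Pr|_{\mathcal{F}_{T_\eps}} = N_{T_\eps}/N_0$, together with the fact that $T_\eps<\infty$ $\Pr^*$-a.s.\ (a consequence of Lemma~\ref{radGFlem2}), to derive the key identity
\[
\eps^{d-2}\,\Pr\{\cradt_\infty(z)\le\eps\} \;=\; H(z)\,\E^*\!\left[e^{-\beta T_\eps}\,\Lambda_{T_\eps}^{1-4a}\right].
\]
Justifying the change of measure at the unbounded stopping time $T_\eps$ requires uniform integrability of the stopped family $\{N_{t\wedge T_\eps}\}_{t\ge 0}$; this is obtained from the bound $\cradt_t\ge\eps$ for $t\le T_\eps$ together with Koebe-type distortion estimates that control $\Lambda_t$ and the exponential weight on $\{t\le T_\eps\}$.

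Since $T_\eps\uparrow T$ $\Pr^*$-a.s.\ as $\eps\downarrow 0$ and $\E^*[e^{-\beta T}]=\Phi(z)<\infty$ by Lemma~\ref{radGFlem2}, it remains to prove
\[
\lim_{\eps\downarrow 0}\E^*\!\left[e^{-\beta T_\eps}\,\Lambda_{T_\eps}^{1-4a}\right] \;=\; c^*\,\Phi(z).
\]
Near the target, $\Lambda_t = \sin(\arg Z_t) + O(|Z_t|^2)$ as $Z_t\to 0$, and the tilted Loewner equation~\eref{hstarweight} degenerates to the two-sided radial SLE equation~\eref{chord2}. Conditioning on $\mathcal{F}_{T_\delta}$ for $\delta\gg\eps$, the subsequent evolution is therefore asymptotically chordal, so the chordal analysis from~\cite{LawWerGF} gives $\E^*[\Lambda_{T_\eps}^{1-4a}\mid\mathcal{F}_{T_\delta}]\to c^*$ $\Pr^*$-a.s. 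A dominated-convergence argument (using $e^{-\beta t}\le 1$ when $\kappa\le 4$, and the moment bound $\E^*[e^{\xi T}]<\infty$ with $\xi=-\beta$ obtained inside the proof of Lemma~\ref{radGFlem2} when $\kappa>4$) then passes the limit inside, yielding the claim and hence $G(z) = H(z)\,\Phi(z)$. The specialization $\kappa=4$ gives $\beta=0$ and $\Phi\equiv 1$, so $G(z)=H(z)$.

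The main obstacle is the last step: showing that $\E^*[\Lambda_{T_\eps}^{1-4a}]$ inherits the chordal limit $c^*$ requires a delicate comparison of SLE-conditioned-to-hit-$z$ with two-sided radial SLE near the target, together with uniform control on the approach angle $\arg Z_{T_\eps}$. Uniform integrability at the unbounded stopping time $T_\eps$—needed both to justify optional stopping in the Girsanov identity and to interchange the limit with $\E^*$—is the other substantial technical point, particularly when $\kappa>4$ where $\beta<0$ makes $e^{\beta t}$ decay.
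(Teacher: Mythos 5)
Your outline reproduces the paper's own strategy: tilt by the local martingale $N_t$ of \eref{locmgN}, use $N_0=H(z)$ to reduce the claim to $\E^*\bigl[e^{-\beta \tau_\eps}\,\Lambda_{\tau_\eps}^{1-4a}\bigr]\rightarrow c^*\,\Phi(z)$, and finish by a chordal comparison near the target point plus dominated convergence (the Girsanov identity you write is exactly the displayed computation at the start of Section~\ref{SectMainThm}, with the unnormalized stopping time in place of $\tau_\eps=\inf\{t:\cradt_t=\eps\cradt_0\}$). But the two decisive steps are asserted rather than proved, and the first, as you have set it up, does not go through. Conditioning at a \emph{fixed} mesoscale $\delta$ and proving $\E^*[\Lambda_{T_\eps}^{1-4a}\mid\mathcal{F}_{T_\delta}]\rightarrow c^*$ a.s.\ does not yield the limit of $\E^*[e^{-\beta T_\eps}\Lambda_{T_\eps}^{1-4a}]$: the factor $e^{-\beta T_\eps}$ is not $\mathcal{F}_{T_\delta}$-measurable, so the expectation does not factor, and for $4<\kappa<8$ it is unbounded, so dominating by $\E^*[e^{\xi T}]<\infty$ controls the exponential alone but not the product with $\Lambda_{T_\eps}^{1-4a}$, which blows up as the approach angle degenerates. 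The paper resolves precisely this decoupling by choosing an $\eps$-dependent mesoscale, $\rho_\eps=\inf\{t:\cradt_t=\eps^{1/2}\}$, and a good event $V_\eps$ on which $\tau_\eps-\tau_{\rho}=o(1)$, so that $e^{-\beta\tau_\eps}$ can be replaced by the $\mathcal{F}_{\rho}$-measurable $e^{-\beta\tau_\rho}$, the conditional expectation of $\Lambda_{\tau_\eps}^{1-4a}$ given $\gamma_\rho$ is $c^*[1+o(1)]$ \emph{uniformly}, and dominated convergence is applied to $e^{-\beta\tau_\rho}1_{V_\eps}$ only. You need either this device or an actual proof of the asymptotic factorization $\E^*[e^{-\beta T_\eps}\Lambda_{T_\eps}^{1-4a}\mid\mathcal{F}_{T_\delta}]\sim c^*\,\E^*[e^{-\beta T_\eps}\mid\mathcal{F}_{T_\delta}]$; neither is in your sketch.

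Second, what you label ``the main obstacle'' is in fact the bulk of the paper's proof and is missing: (a) the quantitative small-scale absolute continuity between radial SLE in $\Half^*$ and chordal SLE in $\Half$ (Radon--Nikodym derivative $1+o_r(1)$), which is what makes the conditional law of the angle equilibrate to the chordal stationary density uniformly and with enough integrability to handle the unbounded integrand $\Lambda^{1-4a}$; and (b) the proof that the exceptional configurations --- $|\tilde h_\rho(z)|$ not small at the mesoscale, or the curve escaping to scale $\eps^{1/8}$ and returning before $\tau_\eps$ --- contribute only $o(\eps^{2-d})$, i.e.\ \eref{jun13.2}. The paper establishes (b) via Beurling-estimate arguments together with two a priori radial estimates that do not follow from the chordal results of \cite{LawWerGF}: the up-to-constants one-point bound of Proposition~\ref{prop22} and the no-return estimate of Proposition~\ref{jun14.prop1}. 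Without these inputs your plan is a correct road map of the paper's argument but not a proof; the parts that are new relative to the chordal case are exactly the parts left unproved.
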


 We end this section with a brief discussion of some of the ingredients that go into the proof.
 For small times, radial SLE in $\Half^*$ is very close
 to chordal SLE$_\kappa$ in $\Half$.  Let us make a precise
 statement of this fact.  For $ 0 < r \leq 1$,
 let
 \[   \lambda_r = \inf\{t: |\gamma(t)| = r\}. \]
The half disk of radius $1$ about $0$ in $\Half^*$ can be
considered as the half disk in $\Half$, and hence we can
view radial SLE in $\Half^*$ up to time $\lambda_r$ as
living in $\Half$.  Let $H_r$ be the unbounded
component of $\Half \setminus \gamma_{\lambda_r}$,
and let $\crad_{H_r}(z)$, $\cradt_{H_r}(z)$ denote
 ($1/2$ times) the conformal radius about $z$
where $H_r$ is considered a subdomain of $\Half$
and $\Half^*$, respectively. If $|z| \leq r$,
\begin{equation}
\label{jun14.1}
 \Upsilon_{H_r}(z) = \cradt_{H_r}(z)
 \, [1 + o_r(1)].
 \end{equation}
 Here and for the remainder of this section,
 $o_r(1)$ will denote a term that goes to zero in
 $r$ uniformly over all other variables.

 Let us assume that $\gamma$
is parametrized using the half plane capacity in
$\Half$; that is, the maps $g_t$ satisfy
\eref{chordalL}.  Since the half disk of radius $r$
has half plane capacity $r^2$, we know that
$\lambda_r \leq r^2/a$.
Considered as a solution to a radial equation~\eref{radialL}, the parameterization is slightly
different, say $\tilde \lambda(t) = \lambda(\sigma(t)).$
However,
\[   \sigma(t) = t \, [1+O(t) ].\]
In particular, in the radial parametrization,
$\lambda_r \leq (r^2/a) \, [1+O(r)].$

For $r \leq 1$, let $\Pr$ and $\tilde \Pr$ denote the probability measures
on $\gamma(s)$,  $0 \leq s \leq \lambda_r$, given by chordal and
 radial SLE$_\kappa$ respectively.  We can
   think of these either as measures on curves
   modulo reparametrization or on parametrized
   curves; however, in the latter case, one must
   take the same parametrization (chordal or radial)
   in both cases.  Then $\Pr$ and $\tilde \Pr$
   are mutually absolutely continuous with
   $   \tilde \Pr = \Pr [1 + o_r(1)] $;
   that is,
   \[      \left| \frac{\dd \tilde \Pr}
    {\dd\Pr} - 1\right| = o_r(1) .\]

 The analysis of the chordal case shows that
   if $0 < \epsilon
   \leq 1/2$, then for $|z| \leq r/\log(1/r)$,
   \[     \Prob{\Upsilon_\infty(z) \leq \epsilon
    \, \Upsilon_0(z)} = \Prob{\Upsilon_{H
    _r}(z) \leq \epsilon \, \Upsilon_0(z)}
     \, [1 + o_r(1)]. \]
   Also, there exists $\alpha > 0$ such that
    \[     \Prob{\Upsilon_\infty(z) \leq \epsilon
    \, \Upsilon_0(z)} = c^* \, S(z)^{4a-1}
     \, \epsilon^{4a-1} \, [1 + O(\epsilon^\alpha)],\]
  where in this case the $O(\cdot)$ is uniform
  over $z$ and we recall $S(z) = \sin (\arg z)$.  Therefore, there exists $\epsilon_r
  \downarrow 0$, such that for $\epsilon < \epsilon_r$,
  \[ \tilde \Pr\{\Upsilon_{H_r}(z) \leq \epsilon \, \Upsilon_0(z)\}
     =  c^* \, S(z)^{4a-1}
     \, \epsilon^{4a-1} \, [1
      + o_r(1)], \]
     and hence by~\eref{jun14.1}, if
     $|z| \leq r/\log(1/r)$,  $\epsilon < \epsilon_r$, then
  \[  \tilde
   \Pr\{\cradt_{H
    _r}(z) \leq \epsilon \, \cradt_0(z)\}
     =  c^* \, \Lambda(z)^{4a-1}
     \, \epsilon^{4a-1} \, [1
      + o_r(1)],\]
 where $\Lambda(z)$ is given by~\eref{defnlambda}. It is not obvious at the moment, and we prove
 in Proposition~\ref{jun14.prop1}, that
 if $|z| \leq r/\log(1/r)$, then
 \[  \tilde
   \Pr\{\cradt_{H_r}(z) \leq \epsilon \, \cradt_0(z)\}
      =  \tilde
   \Pr\{\crad_{\Half
    \setminus
    \gamma_\infty}(z) \leq \epsilon \, \cradt_0(z)\}
     \,  [1
      + o_r(1)].\]

 We will not give the details of the proof
 of the estimates in the last
 paragraph, but we will sketch it here.  One can compare
chordal SLE$_\kappa$ from $0$ to $\infty$ in $\Half$ and radial
SLE  from $0$ to $i$ in $\Half$ by tilting by a particular local
martingale; see~\cite{ParkCity}.  Radial SLE$_\kappa$ in $\Half^*$
can be obtained from radial SLE$_\kappa$ in $\Half$ from $0$ to $i$
by the (multiple valued) transformation $f$ as in~\eref{jun5.1}.
 In both cases, one sees that the driving function changes from
 a standard Brownian motion to one with a drift.  Under our conditions,
 the drift is uniformly bounded, and since time is bounded
 by $O(r^{1/2})$, we can bound the Radon-Nikodym
 derivative in the  change of measure.

\section{Proof of Theorem \ref{main_thm}} \label{SectMainThm}

We start by reviewing the proof of the existence of the
Green's function in the chordal case because we will need to use
some of these facts.
Fix $z \in \Half$ and
let $\gamma$ be a chordal SLE$_\kappa$
path from $0$ to $\infty$ in $\Half$
which has equations
\begin{equation}  \label{chor2}
\dd X_t = a \, \frac{X_t}{X_t^2 + Y_t^2} \di t + \dd B_t, \; \;\;\;
  \dd Y_t = - a \, \frac{Y_t}{X_t^2 + Y_t^2} \di t .
\end{equation}
The corresponding local martingale is
\[  M_t = M_t(z) =
  \Upsilon_{t} ^{d-2}  \, S_t^{4a-1} , \]
where
\[  \Upsilon_t = \Upsilon_{\Half \setminus \gamma_t}(z),  \;\;\; \;
 S_t =  S_{\Half \setminus \gamma_t}(z; \gamma(t), \infty) \]
 as in Section~\ref{SectChordalGFReview}.
  Let $\tau_\epsilon = \inf\{t: \Upsilon_t = \epsilon\, \Upsilon_0 \}$.  We will now review the proof that
\begin{equation}  \label{jun14.4}
  \lim_{\epsilon \downarrow 0}
   \epsilon^{d-2} \, \Prob{\tau_\epsilon < \infty}
     = c^* \, S(z)^{4a-1}.
     \end{equation}
We begin by observing that
\begin{eqnarray*}
\Prob{\tau_\epsilon < \infty} &  = &
  \E\left[1\{\tau_\epsilon < \infty \}\right]\\
  &  =  &(\epsilon \Upsilon_0)^{2-d} \, \E\left[ M_{\tau_\epsilon} \, S_
  {\tau_\epsilon} ^{1-4a} ; \, \tau_\epsilon
   < \infty\right]  \\
& = &  \epsilon^{2-d} \, S(z)^{4a-1}
 \, \cGF(z)^{-1} \, \E\left[ M_{\tau_\epsilon} \, S_
  {\tau_\epsilon} ^{1-4a} ; \, \tau_\epsilon
   < \infty\right]  \\
   &  =  & \epsilon^{2-d} \,  S(z)^{4a-1} \,
          \E^*\left[S_{\tau_\epsilon}^{1-4a}\right]
           .
           \end{eqnarray*}
 Here we write $\E^*$ for the measure obtained
 by tilting by the local martingale $M_t$.  This shows that proving~\eref{jun14.4} is equivalent to
 showing that
 \[   \lim_{\epsilon \downarrow 0}
        \E^*\left[S_{\tau_\epsilon}^{1-4a}\right]
        = c^*.\]
This is proved using a one-dimensional
stochastic differential equation.
If the path is reparametrized so that $\Upsilon_{\sigma(t)} = e^{-2at}\, \Upsilon_0$
and $\Theta_t = \arg Z_{\sigma(t)}$, then $\Theta_t$ satisfies
\begin{equation}  \label{may16.3}
    \dd\Theta_t = (1-2a) \, \cot\Theta_t \di t + \dd B_t,
    \end{equation}
where $B_t$ denotes a standard one-dimensional Brownian motion.  Note that $1-2a < 1/2$
and by comparison with a Bessel process we see that the process
reaches $0$ or $\pi$ in finite time.  This reflects the fact that
$\Upsilon_\infty < \infty$; indeed, the time 
in the new parametrization at
which the  boundary is reached
 is $- (2a)^{-1}\log \Upsilon_\infty$. If
 $\hat M_t = M_{\sigma(t)}$, then
 \[  d \hat M_t = 4a \, [\cot \Theta _t] \, \hat{M}_t
 \, dB_t.\]
If we weight by the local martingale $M_t$
(or, equivalently, by $\hat M_t$), then we get
\begin{equation}  \label{may16.4}
         \dd\Theta_t = 2a \, \cot \Theta_t \di t + \dd W_t,
         \end{equation}
where $W_t$ is a standard Brownian motion in the new measure
$\Pr ^*$.  This weighted measure is sometimes called two-sided
radial SLE$_\kappa$ from $0$ to $\infty$ through $z$ (stopped
at $T = T_z$, the time it reaches $z$). Since $2a > 1/2$, we see
that in the new measure the process survives forever; in other
words, the weighted process hits $z$.  The invariant probability
density for the process~\eref{may16.4} is
\[   f(\theta) =\frac{c^*}{2} \,   \sin^{4a} \theta, \;\; 0\le \theta < \pi, \;\;\; {\rm where} \;\;\;
   c^* =2\,  \left[\int_0^\pi \sin^{4a}x \di x \right]^{-1}.\]
Moreover, there exists $\lambda > 0$ such
that  if $p^*_t(\theta_1,\theta_2)$ denotes the transition density
for the process, then for every $t$ with $e^{-2at} \leq 1/2$,
\[   p_t^*(\theta_1,\theta_2) = f(\theta_2) \, \left[1 + O(e^{-\lambda t})
 \right], \]
where the error term $O(e^{-\lambda t})$ is uniform over $\theta_1$, $\theta_2$.

 The proof in $\Half^*$ is similar  but  with some
 added complications.  Recall from~\eref{locmgN} that the local martingale is
 \[   N_t = N_t(z) =  e^{\beta t} \cradt_{t}^{d-2} \, \Lambda_t^{4a-1}
  \]
 where $\cradt_t =\crad_{\Half^*\setminus \gamma_t}(z)$ and $\Lambda_t = \Lambda_t(z)$ as in~\eref{DefnUpsilonLambda}.
  We now fix $z \in \Half^*$ and allow constants in what follows  to depend on $z$.
For each $\epsilon  > 0$, let
 \[  \tau_\epsilon = \inf\{t: \cradt_{t}
   = \epsilon \, \cradt_0 \}.    \]
 What we need to show is that
 \[  \Prob{\tau_\epsilon < \infty}
  = c^* \, \Phi(z) \, \Lambda(z)^{4a-1} \, \epsilon^{2-d}
   \, [ 1 + o(1)], \]
   where $o(1)$ represents a term which
   may depend on $z$ that goes
   to zero as $\epsilon \rightarrow 0$.
In analogy with the chordal case, observe that
  \begin{eqnarray*}
\Prob{\tau_\epsilon < \infty}&
  =  &\E\left[  1\{\tau_\epsilon < \infty\}
 \right]\\
 & = & \epsilon^{2-d} \,\cradt_0^{2-d}
  \,  \E\left[N_{\tau_\epsilon}
  \, e^{-\beta \tau_\epsilon} \, \Lambda_{\tau_\epsilon}
  ^{1-4a} ; \,\tau_\epsilon < \infty\right]\\
  & = & \epsilon^{2-d} \,\cradt_0^{2-d}\, N_0 \, \E^*\left[
      e^{-\beta \tau_\epsilon} \,
      \Lambda_{\tau_\epsilon}
  ^{1-4a} \right]\\
  & = & \Lambda(z)^{4a-1} \, \epsilon^{2-d}
   \, \E^*\left[
      e^{-\beta \tau_\epsilon} \,
      \Lambda_{\tau_\epsilon}
  ^{1-4a} \right],
  \end{eqnarray*}
where $\E^*$ denotes the measure obtained by
tilting by the local martingale $N_t$.
To prove the theorem, we need to show that
\[  \lim_{\epsilon \downarrow 0}
  \E^*\left[
      e^{-\beta \tau_\epsilon} \,
      \Lambda_{\tau_\epsilon}
  ^{1-4a} \right] = c^* \,\E^*\left[
   e^{-\beta T} \right] = c^* \, \Phi(z).\]
The basic reason this is true can be explained
as follows.  As $\epsilon \rightarrow 0$, the
quantity $\Lambda_{\tau_\epsilon}$ is very much
like $S_{\tau_\epsilon}$ and asymptotically it
behaves as in the chordal case.  The quantity
fluctuates as $\epsilon \rightarrow 0$, but
with a stationary distribution giving
\[    \E\left[ \Lambda_{\tau_\epsilon}
  ^{1-4a} \right] \rightarrow c^* . \]
The quantity $e^{-\beta \tau_\epsilon}$
is a ``macroscopic'' quantity that does not change
much when $\epsilon$ is small.  Since
 $\Lambda_{\tau_\epsilon} $ {\em is} fluctuating
 on the small scale, we can see that the
 two random variables are asymptotically independent,
 \[     \E^*\left[
      e^{-\beta \tau_\epsilon} \,
      \Lambda_{\tau_\epsilon}
  ^{1-4a} \right]  \sim
   \E^*\left[
      e^{-\beta \tau_\epsilon}\right] \,
      \E^*\left[  \Lambda_{\tau_\epsilon}
  ^{1-4a} \right] \rightarrow
  c^* \,  \E^*[e^{-\beta T}].\]
 
To prove this, we first note
that  if $V$ is an event, then
\begin{equation}  \label{jun13.3}
 \Pr[\{\tau_\epsilon < \infty\}
 \cap V ] =
\Lambda(z)^{4a-1} \, \epsilon^{2-d}
   \, \E^*\left[
      e^{-\beta \tau_\epsilon} \,
      \Lambda_{\tau_\epsilon}
 ^{1-4a}\, 1_V \right].
  \end{equation}
Let
\[  \rho =
\rho_{\epsilon} = \inf\left\{t: \cradt_t
 = \epsilon^{1/2} \right\}.\]
 For $t \geq \rho$, let $
 \hat \gamma(t) = \hat \gamma^\epsilon(t) =
\tilde h_{\rho}(\gamma(t))$,
\[  \lambda =   \lambda_{\epsilon}
 = \inf\left\{t \geq \rho: |\hat \gamma(t)| =
    \epsilon^{1/8} \right\}, \]
 and let $ V_{\epsilon}$ be
 the event
 \[   V_{\epsilon} =\left\{|\tilde h_{\rho}(z)| \leq
                    \epsilon^{1/3}, \lambda
                  > \tau_\epsilon  \right\}.\]
To prove the theorem, it suffices to show that
\begin{equation}  \label{jun13.2}
    \lim_{\epsilon \downarrow 0}
 \epsilon^{d-2}
  \,  { \Pr[\{ \tau_\epsilon
    < \infty \}  \setminus V_\epsilon]
    } = 0 ,
\end{equation}
 and
 \begin{equation}  \label{jun10}
    \Pr[\{\tau_\epsilon < \infty\}
    \cap V_\epsilon]
    \sim c^* \, \Phi(z) \,\Lambda(z)^{4a-1}
     \, \epsilon^{2-d}.
    \end{equation}
     To prove~\eref{jun13.2}, let
 \[  \sigma = \sigma_\epsilon
 = \inf\{t: |\gamma(t) - z|
   \leq \epsilon^{1/2}/10\}.\]
The Koebe one-quarter  estimate
implies that
 $c_1 \epsilon^{1/2} \leq \Upsilon_{\sigma} \leq
\epsilon^{1/2}. $  By considering the image of the
line segment from $z$ to $\gamma(\sigma)$
under the map $\tilde h_{\sigma}$ and using
the Beurling estimate, we see that
there exists $c_2$ such that
$     |\tilde h_{\sigma}(z)|
   \leq c_2 \, \epsilon^{1/4}. $
Since
\[    |\tilde h_{\rho}(z)| =
       \frac{ \Im [\tilde h_{\rho}(z)]}
         {S_\rho(z)} \leq \frac{ \Im [\tilde h_{\sigma
         }(z)]}
         {S_\rho(z)}
          = \frac{ S_\sigma(z) \, |\tilde h_\sigma
          (z)|}
         {S_\rho(z)} ,\]
then $|\tilde h_{\rho}(z)| \geq
 \epsilon^{1/3}   $
implies that
\[       S_\sigma
(z)  \geq c_2^{-1} \,  S_\rho(z) \, \epsilon^{1/12}
.\]
Using Proposition~\ref{prop22} below, we see that there exists $c_3 < \infty$
such that
\[ \Prob{\tau_\epsilon < \infty, \,
   S_\sigma(z) \geq c_2^{-1} 
   S_\rho(z) \, \log(1/\epsilon) \,\mid\,
   \gamma_\sigma
    }   \]
    \begin{equation*}
  \hspace{1in} \leq c_3 \,\epsilon^{(1-4a)/12}\,
    \Prob{\tau_\epsilon < \infty \mid
   \gamma_\sigma}. 
   \end{equation*}
Therefore,
\[  \Prob{\tau_\epsilon < \infty, \,
|\tilde h_\rho(z)| \geq  \epsilon^{1/3}}
  \leq c_3 \,\epsilon^{(1-4a)/12}
 \,  \Prob{\tau_\epsilon < \infty}
.\]
 It follows from Proposition~\ref{jun14.prop1}
below that
    \[  \Prob{\lambda <
    \tau_\epsilon < \infty \,\mid\,
|\tilde h_\rho(z)| \leq \epsilon^{1/3} }
= o(\epsilon^{2-d}), \]
establishing~\eref{jun13.2}. 

As for showing~\eref{jun10}, from~\eref{jun13.3} we see that
we need to show that
\[  \lim_{\epsilon \downarrow 0}
  \E^*\left[
      e^{-\beta \tau_\epsilon} \,
      \Lambda_{\tau_\epsilon}
  ^{1-4a} 1_{V_\epsilon}\right] = c^* \, \E^*[e^{-\beta T}].\]
  On the event $V_{\epsilon}$, we have $\tau_\epsilon - \tau_\rho
  = o(1)$.  Hence, it suffices
  to show that
\[  \lim_{\epsilon \downarrow 0}
  \E^*\left[
      e^{-\beta \tau_\rho} \,
      \Lambda_{\tau_\epsilon}
  ^{1-4a} \, 1_{V_\epsilon}\right] = c^* \, \E^*[e^{-\beta T}].\]
The expectation on the left equals
 \[   \E^*\left[
      e^{-\beta \tau_\rho} \,
      \E^*[\Lambda_{\tau_\epsilon}
  ^{1-4a} \, 1_{V_{\epsilon} } \mid \gamma_\rho] \right].\]
We now use the fact that as $\epsilon \rightarrow 0$,
we get better and better approximation to the chordal
case.  In particular, since $\epsilon \ll
\epsilon^{1/2}$ and $\Pr(V_\epsilon) 
\rightarrow 1$,
\[  \E^*\left[\Lambda_{\tau_\epsilon}
  ^{1-4a} \, 1_{V_{\epsilon} } \,\mid\, \gamma_\rho\right] \sim
 \E^*[S_\tau^{1-4a}] , \]
where the right side is the
 $\E^*[S_\tau^{1-4a}]$ in the
calculation~\eref{jun13.3}.  This
gives
\[  \E^*\left[\Lambda_{\tau_\epsilon}
  (z)^{1-4a} \, 1_{V_{\epsilon} } \,\mid\, \gamma_\rho\right]  =
  c^* \,[1 + o(1)], \]
  and hence,
\[  \E^*\left[
      e^{-\beta \tau_\rho} \,
      \Lambda_{\tau_\epsilon}
  ^{1-4a}\, 1_{V_\epsilon}\right]  
  = c^* \,  \E^*\left[
      e^{-\beta \tau_\rho} \,
      1_{V_\epsilon}\right] 
 \,[1 + o(1)].\]
Finally, the dominated convergence
theorem implies that
\[ \lim_{\epsilon \downarrow 0}
 \E^*\left[
      e^{-\beta \tau_\rho} \,
      1_{V_\epsilon}\right] 
= \E^*\left[e^{-\beta T}\right]
 = \Phi(z) .\]
 
\subsection{Some lemmas about radial SLE}

Here we discuss the results needed to prove~\eref{jun13.2}.  We state the following lemma
proved in~\cite{contpaper} which is a radial SLE
analogue of a chordal SLE estimate proved in~\cite{AlbKoz}.

\begin{lemma}   \label{contlemma}
 There exist $0 < c_1 < c_2
< \infty$
such that if $-\pi/2 \leq x \leq \pi/2$ and
$r < |x|$, then if $\gamma$ is radial SLE
in $\Half^*$,
\[    c_1 \,(r/|x|)^{4a-1}
   \leq \Prob{\dist(x,\gamma) \leq r }
     \leq c_2 \, (r/|x|)^{4a-1}. \]
\end{lemma}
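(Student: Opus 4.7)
The strategy is to transfer the chordal SLE analogue proved in \cite{AlbKoz} to radial SLE in $\Half^*$ via the absolute-continuity comparison between radial SLE in $\Half^*$ and chordal SLE in $\Half$ that was sketched in the paragraphs preceding the statement.

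First I would localize the event. Since $r < |x| \le \pi/2$, if $\tau_r := \inf\{t : \dist(\gamma(t),x) \le r\}$, then on $\{\tau_r < \infty\}$ one has $|\gamma(\tau_r)| \le |x| + r < 2|x| \le \pi$. Therefore the event $\{\dist(x,\gamma) \le r\}$ is determined by $\gamma$ stopped at the bounded stopping time $\lambda := \inf\{t : |\gamma(t)| = \pi\}$, which is itself controlled by a deterministic constant since the half-plane capacity of a disk of radius $\pi$ is $O(1)$.

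Next I would compare the stopped measures. By a Girsanov-type argument, the laws of radial SLE in $\Half^*$ and chordal SLE in $\Half$, both stopped at $\lambda$, are mutually absolutely continuous: the two drivers differ only by the contribution of the $a\cot$ drift in~\eref{RadialLEinHstar} versus the chordal Loewner drift in the same coordinates, and this difference is smooth and uniformly bounded on the region $\{|z|\le \pi\}$ in $\Half^*$. Combined with the uniform bound on $\lambda$, this yields a Radon-Nikodym derivative bounded above and bounded away from zero by positive constants depending only on $\kappa$. I would then invoke the chordal estimate of \cite{AlbKoz}: for chordal SLE$_\kappa$ from $0$ to $\infty$ in $\Half$ and $x \in \R$ with $r < |x|$,
\[ c_1'\, (r/|x|)^{4a-1} \le \Prob{\dist(x,\eta) \le r} \le c_2'\, (r/|x|)^{4a-1}. \]
Combining this with the two-sided bound on the Radon-Nikodym derivative gives both directions of the desired estimate, with possibly modified constants.

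The main obstacle is the uniformity in the comparison step across the whole range $|x| \in (0,\pi/2]$. For $|x|$ small this is the $o_r(1)$-closeness already quoted in the excerpt ($\tilde{\Pr} = \Pr[1 + o_r(1)]$ up to time $\lambda_r$). For $|x|$ bounded away from zero one needs a genuine (as opposed to asymptotic) bound: one must show that the exponential local martingale appearing in the Girsanov transform has both bounded first moment (for the upper bound on $\dd\tilde\Pr/\dd\Pr$) and bounded reciprocal first moment (for the lower bound). Since the drift difference is smooth and uniformly bounded on $\{|z| \le \pi\}$ and the time horizon $\lambda$ is deterministically bounded, this reduces to a routine Novikov-type exponential moment estimate, but verifying it carefully — in particular, ensuring that the tip $\tilde h_t(\gamma(t))$ does not create unbounded contributions before time $\lambda$ — is the technical crux of the argument.
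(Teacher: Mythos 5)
First, note that the paper does not prove this lemma at all: it is quoted from \cite{contpaper} (it is the radial analogue of the chordal estimate of \cite{AlbKoz}), so there is no in-paper argument to compare against; what matters is whether your transfer argument is sound, and it has a genuine gap.

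The gap is the localization step. You assert that because $|\gamma(\tau_r)| \le |x|+r < \pi$ on $\{\tau_r < \infty\}$, the event $\{\dist(x,\gamma)\le r\}$ is determined by $\gamma$ stopped at $\lambda=\inf\{t:|\gamma(t)|=\pi\}$. This does not follow: $\tau_r$ need not occur before $\lambda$, since the radial curve can first grow past radius $\pi$ (it is transient toward the interior target, not toward $\infty$) and then return close to the boundary point $x$ at a later time; for $4<\kappa<8$ it even hits the boundary at arbitrarily late times, and for $\kappa\le 4$ it still approaches the boundary with positive probability. So the event in the lemma concerns the \emph{entire} curve and is not measurable with respect to any initial segment; the late-time contribution must be estimated separately (its probability is of the same order, roughly $r^{4a-1}$, so it does not destroy the bound, but controlling it is itself a boundary estimate for radial SLE from a general configuration and requires an iteration over scales or returns, in the spirit of Propositions~\ref{prop22} and~\ref{jun14.prop1}, rather than a one-shot absolute-continuity comparison). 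A second, related problem is the choice of radius $\pi$: the cylinder $\Half^*$ has circumference $\pi$ (half-width $\pi/2$), so the region $\{|z|\le \pi\}$ does not embed in $\Half$, and a curve of that diameter can wrap around the cylinder; the identification of an initial piece of radial SLE in $\Half^*$ with a chordal curve in $\Half$, and the bounded Radon--Nikodym derivative you invoke, are only available up to a stopping time at which the curve still has diameter strictly smaller than the cylinder scale. The correct shape of the argument is: compare with chordal SLE (via \cite{AlbKoz}) only up to such a small-scale stopping time, where mutual absolute continuity with constants holds, and then control the probability of later approaches to $x$ by a separate, iterated estimate — which is essentially what is done in \cite{contpaper}. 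As written, your proof both mislocates the event and applies the comparison on a region where it is not valid.
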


We give a corollary of this. Suppose $z=x+iy \in \Half^*$ with $y \leq 1$.  For every $\rho >0$,
there exists $K$ such that if $\dist(z,\gamma)
\geq Ky$, then $\cradt_{\infty}
(z) \geq (1-\rho) \, \cradt_0(z)$.
This gives the following.  Recall
that $S(x+iy) \asymp \Lambda(x+iy)$ for
$y \leq 1$.

\begin{lemma} \label{contlemma2}
 For every $\rho > 0$, there exists
$c = c_\rho < \infty$ such that if $z = x+iy \in \Half^*$ with $y > 0$, then
\[      \Pr\{\cradt_\infty(z) \leq
   (1-\rho) \,\cradt_0(z) \} \leq c \, S(z)^{4a-1}, \]
   where $S(z) = y/|z|$.
\end{lemma}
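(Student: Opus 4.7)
The plan is to combine the observation recorded immediately before the statement---that the event $\{\cradt_\infty(z) \le (1-\rho)\cradt_0(z)\}$ forces the trace $\gamma$ to come within distance $Ky$ of $z$ (provided $y\le 1$)---with the radial boundary-hitting estimate of Lemma~\ref{contlemma} applied to the real part of $z$. All the probabilistic content is already packaged in those two ingredients; what remains is a geometric comparison.

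First I would dispose of the easy regime. By the periodicity of $\Half^*$ we may assume $x \in [-\pi/2,\pi/2]$. If $S(z) \ge s_0$ for a fixed small $s_0 > 0$, then the trivial bound $\Pr\{\cdot\} \le 1 \le s_0^{1-4a}\, S(z)^{4a-1}$ already gives the claim, so the interesting case is $S(z) < s_0$. In that regime $y = S(z)|z| < s_0|z|$, and $|z|^2 = x^2+y^2$ yields $|x| \ge \sqrt{1-s_0^2}\,|z|$; in particular
\[ y/|x| \le S(z)/\sqrt{1-s_0^2}, \]
so $y$ is small compared with $|x|$. By choosing $s_0$ small enough (depending on $\pi/2$) I can further ensure $y \le 1$, so the preliminary remark applies.

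Applying that remark gives $\dist(z,\gamma) \le Ky$ on the event in question, whence by the triangle inequality
\[ \dist(x,\gamma) \le |x-z| + \dist(z,\gamma) \le (1+K)y. \]
Shrinking $s_0$ further if necessary so that $(1+K)y < |x|$, Lemma~\ref{contlemma} yields
\[ \Pr\{\cradt_\infty(z) \le (1-\rho)\cradt_0(z)\} \le \Pr\{\dist(x,\gamma) \le (1+K)y\} \le c_2\left(\frac{(1+K)y}{|x|}\right)^{4a-1}, \]
and the comparison $y/|x| \le (1-s_0^2)^{-1/2} S(z)$ converts this into $c_\rho\, S(z)^{4a-1}$ (recall $4a-1 > 0$ since $\kappa < 8$), with $c_\rho = c_2\bigl((1+K)/\sqrt{1-s_0^2}\bigr)^{4a-1}$.

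The only real obstacle is the bookkeeping: $s_0$ must be chosen once and for all, small in terms of $\rho$, so that simultaneously $y \le 1$, the preliminary remark applies, $(1+K_\rho)y < |x|$ (so Lemma~\ref{contlemma} applies), and $y/|x|$ is comparable to $S(z)$. Each of these is achieved for $s_0$ sufficiently small, and no further probabilistic input is needed.
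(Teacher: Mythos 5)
Your argument is correct and is exactly the route the paper intends: it deduces the lemma from the remark preceding it (distance $\geq K_\rho y$ from $\gamma$ forces $\cradt_\infty(z)\geq(1-\rho)\cradt_0(z)$) together with Lemma~\ref{contlemma} applied at the real part $x$, with the trivial bound handling the regime where $S(z)$ is bounded below. The paper leaves these details implicit ("This gives the following"), and your bookkeeping of $s_0$, $y\le 1$, and $r<|x|$ fills them in correctly.
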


\begin{proposition} \label{prop22}
 There exists $0 < c_1
< c_2 < \infty$ such
that if $z = x+iy \in \Half^*$ with $y \leq 1$ and
$\epsilon   \leq 1/2$, then
\[   c_1 \, S(z)^{4a-1} \, \epsilon^{2-d}
\leq \Pr\{\cradt_\infty(z) \leq \epsilon \,
\cradt_0(z)\}
    \leq  c_2 \, S(z)^{4a-1} \, \epsilon^{2-d} \]
    where $S(z) = y/|z|$.
    \end{proposition}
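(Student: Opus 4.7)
The plan is to reduce Proposition~\ref{prop22} to the corresponding two-sided chordal SLE estimate, via the comparison between chordal and radial SLE sketched at the end of Section~4.

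The chordal analogue asserts that for chordal SLE$_\kappa$ $\hat\gamma$ from $0$ to $\infty$ in $\Half$,
\[ \Pr\{\Upsilon_{\Half \setminus \hat\gamma_\infty}(z) \leq \epsilon \, \Upsilon_0(z)\} \asymp S(z)^{4a-1} \, \epsilon^{2-d} \]
uniformly for $z \in \Half$ with $y \leq 1$ and $\epsilon \leq 1/2$. This is essentially the calculation used for chordal SLE in Section~\ref{SectMainThm}: using the local martingale $M_t = \Upsilon_t^{d-2} S_t^{4a-1}$ stopped at $\tau_\epsilon$ yields $\Pr\{\tau_\epsilon < \infty\} = S(z)^{4a-1} \, \epsilon^{2-d} \, \E^*[S_{\tau_\epsilon}^{1-4a}]$, and exponential ergodicity of the tilted SDE~\eref{may16.4}, which has invariant density $f(\theta) = (c^*/2)\sin^{4a}\theta$, bounds $\E^*[S_{\tau_\epsilon}^{1-4a}]$ above and below by positive constants uniformly in $z$ and $\epsilon$.

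To transfer to the radial setting I would fix a small $r_0 > 0$ and split into two regimes. When $|z| \leq r_0/\log(1/r_0)$, the absolute continuity sketched at the end of Section~4 gives a Radon-Nikodym derivative in $[1/2, 2]$ between chordal and radial SLE on $\gamma[0, \lambda_{r_0}]$, while Proposition~\ref{jun14.prop1} (to be established) implies that the event $\{\cradt_\infty(z) \leq \epsilon \, \cradt_0(z)\}$ agrees with its ``truncated'' version based only on $\gamma[0, \lambda_{r_0}]$ up to an $o(\epsilon^{2-d})$ error; the chordal bound is then transferred up to multiplicative constants. When $|z| \geq r_0/\log(1/r_0)$ with $y \leq 1$, both $S(z)$ and $\cradt_0(z)$ are bounded above and below, so the required estimate reduces to $\Pr\{\cradt_\infty(z) \leq \epsilon \, \cradt_0(z)\} \asymp \epsilon^{2-d}$. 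Here I would decompose at the first time $\sigma$ that $\gamma$ enters $B(z, y/4)$: by Lemma~\ref{contlemma} combined with a Beurling-type bound to upgrade from boundary to interior points, the probability of $\{\sigma < \infty\}$ is bounded above and below by positive constants, and conditional on $\{\sigma < \infty\}$ the conformal map $\tilde h_\sigma$ sends $z$ to a point of bounded size inside a domain comparable to $\Half$, so the chordal estimate applied to the residual SLE produces the $\epsilon^{2-d}$ factor for the lower bound, and the chordal upper bound the matching upper estimate.

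The main obstacle is preserving the $\epsilon^{2-d}$ exponent through the comparison, since the raw absolute continuity between chordal and radial SLE only yields bounded constant factors. The resolution is that the $\epsilon$-dependence arises entirely from the residual SLE after the first close approach at $\sigma$, when the map $\tilde h_\sigma$ has already placed the problem at a standard scale; keeping the comparison quantitative there requires Koebe-type distortion bounds on $\tilde h_\sigma$ near $z$, which are available because of the separation between the scale $y$ of $z$ and the scale of the conformal comparison.
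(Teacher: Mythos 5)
There is a genuine gap, and it sits exactly where the real work of Proposition~\ref{prop22} lies: controlling the radial curve's repeated returns to the neighborhood of $z$. Your reduction for $|z|\le r_0/\log(1/r_0)$ invokes Proposition~\ref{jun14.prop1} to discard the part of the event created after time $\lambda_{r_0}$, but in the paper Proposition~\ref{jun14.prop1} is proved \emph{from} Proposition~\ref{prop22}: its proof uses the bound $\Pr[E_k]\le c\,S(z)^{4a-1}2^{-k(2-d)}$, i.e.\ the upper bound of the present proposition, together with the estimate $S_\lambda(z)\le c\,2^{-k/2}\,\Im(z)/r$ taken from the proof of~\ref{prop22}. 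So as written your argument is circular, and you give no independent route to~\ref{jun14.prop1}. The phenomenon a single chordal comparison cannot see is that after the curve reaches scale $r_0$ it is again a radial SLE aimed at $0$ and can return near $z$ arbitrarily many times, each visit shrinking $\cradt$ further; likewise ``the chordal estimate applied to the residual SLE'' in your second regime begs the question, since the residual problem is again the radial problem for the point $\tilde h_\sigma(z)$, not a chordal one. The paper supplies the missing mechanism: an induction over successive approach times $\xi_k$, in which the Beurling/harmonic-measure bound~\eref{jun8.1}, namely $S_{\sigma_1}(z)\le c\,r^{1/2}y$ when $\cradt_{\sigma_1}\asymp ry$, feeds into Lemma~\ref{contlemma} to show that each further return costs a geometric factor $|z|^{\beta}$ with $\beta=2a-\tfrac12>0$, so that the contributions~\eref{jun8.3} are summable over the number of returns. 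Any correct proof needs some quantitative decay of this kind per return; your sketch contains none.

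A secondary error: in the regime $|z|\ge r_0/\log(1/r_0)$, $y\le 1$, you assert that $S(z)=y/|z|$ and $\cradt_0(z)$ are bounded above and below; they are not (let $y\to 0$ with $|z|\asymp 1$), so the proposition does not reduce there to $\Pr\{\cradt_\infty(z)\le\epsilon\,\cradt_0(z)\}\asymp\epsilon^{2-d}$, and the probability of your first-entry event $\{\sigma<\infty\}$ is not bounded below by a constant either --- it is comparable to $S(z)^{4a-1}$. This is in fact how the paper recovers the missing factor in that regime: Lemma~\ref{contlemma2} applied at the stopping time $\rho_r$ gives $\Pr\{\rho_r<\infty\}\le c\,S(z)^{4a-1}$, and after mapping by $\tilde h_{\rho}$ one is back in the small-$|z|$ case. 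Your decomposition at $\sigma$ could be repaired along those lines, but only once the small-$|z|$ case --- hence the return-control described above --- is actually in place.
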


  \begin{proof}
  The lower bound for $|z| <  1/4$ can be established
  by comparison with chordal SLE$_\kappa$.  For
  other $z$, we can reduce to the $|z| < 1/4$ case by noting that there exists $c >0$
  (independent of $z$)
  such that  with probability at least $c$,
    $Z_t  \leq 1/8$
  and $S_t > c \, S(z)$
  for some $t$.  We will prove the harder
   upper bound.

   Recall that $S(z) \asymp \Lambda(z)$,
  so we can use either on the right  side (with appropriate
  constant).  Without loss of generality we assume
  $\epsilon = 2^{-n}$ for positive integer $n$.
The case $\epsilon = 1/2$ was proved in~\cite{contpaper}.
As before, let
\[  \tau_\epsilon = \inf\{t:
  \cradt_t = \epsilon \, \cradt_0\}.\]

  We will first  find a $\delta$ such that the estimate
  holds for $|z| \leq \delta$.
  Consider the
  following sequence of stopping times:
  \[   \sigma_0 = \xi_0  = 0 , \;\;\;\;
   \sigma_1 = \inf\{t: |\gamma(t)| = 2^{-1} \}. \]
  For $k \geq 1$, let
  \[   \eta^{k}(t) = \tilde h_{\sigma_{k}}(\gamma(t
     )), \;\;
  \xi_k = \inf\left\{t \geq \sigma_k:
  |\eta^k(t) -  \tilde h_{\sigma_k}(z)
  | \leq 2^{-m} \, \Im[ \tilde h_{\sigma_k}(z)] \right\}. \]
  Here $m \geq 2$ is a fixed  integer which we will specify
  below.  If $\xi_k = \infty$ we set $\xi_j = \sigma_j = \infty$
  for $j > k$.
    If $\xi_k < \infty$, let
   \[  \gamma^k(t) =  \tilde h_{\xi_{k}}(\gamma(t
     + \xi_k)), \;  z_k = \tilde h_{\xi_k}(z), \;\ \sigma_{k+1} = \inf \left \{ t \geq \xi_k : |\gamma^k(t)| = 2^{-1} \right \}. \]
    The Koebe one-quarter  theorem implies that
    \[  2^{-m-1} \cradt_{\sigma_k} \leq \cradt_{\xi_k}
     \leq \cradt_{\sigma_k}.\]

     By considering the conformal image under $\tilde h_{\xi_k}$ of the line segment
     of length $2^{-m} \, \Im(\tilde h_{\sigma_k}(z))$
     between $\tilde h_{\sigma_k}(z)$ and $\eta^k(\xi_k)$ and using
     the Beurling estimate,
     we see that we can choose $m$ (uniformly
     over $k$ and $z$) so
     that  $|z_k| \leq   \Im(z) \leq |z|$.  We fix such an $m$.
    Let
  $E_k$, $k=0,1,2,\ldots$, denote the event
  \[     E_k = \left\{\xi_k \leq \tau_\epsilon < \sigma_{k+1}
    \right\}.\]
   We claim that there exists $c_0 < \infty$, $\delta > 0$,
    such that if $|z| \leq \delta$, then
\begin{equation}  \label{jun8.3}
    \Pr[E_k] \leq c_0 \, 2^{k} \,
    \, S(z)^{4a-1} \,  |z|^{k\beta} \,\epsilon^{2-d}, \;\;\;\;
     \beta = 2a - \frac 12 > 0.
    \end{equation}
   The $k=0$ case (for any $\delta \leq 1$)
    follows from the chordal estimate and the
     absolute continuity of radial and chordal
    SLE$_\kappa$ up to time $\sigma_1$.  To
    be specific, let $c_0 < \infty$ be a constant such
    that for all $|z| \leq 1/8$,
 \begin{equation}  \label{jun7.1}
   \Prob{\tau_\epsilon < \sigma_1} \leq c_0 \, S(z)^{4a-1}
     \,\epsilon^{2-d}.
     \end{equation}

   Let us consider the $k=1$ case.  Suppose that $z=x+iy$,
   $r \leq 1$, and
   $ry
    < \cradt_{\sigma_1} \leq 2r y$.  If we consider
   $\gamma:= \gamma[0,\sigma_1]$ as a chordal path in $\Half$,
   then we claim that
   \begin{equation}  \label{jun8.1}
   S_{\sigma_{1}}(z) \leq c_1 \, r^{1/2} \, y.
   \end{equation}   To see this we consider $D = \Half
   \setminus \gamma $ and let $g:D \rightarrow
   \Half$ be a conformal transformation with $g(\gamma(\sigma_1))
   = 0$, $g(\infty) = \infty$. Since the angle is
   a conformal invariant,  $S $ is comparable
   to the minimum of the probabilities that a Brownian
   motion starting at $g(z)$ exits $\Half$ at the positive or negative
   real axis, respectively.
   By conformal invariance, this is bounded above
   by the probability that a Brownian motion starting at $z$
   reaches the circle of radius $1/2$ without leaving $D$.
   By the Beurling estimate, the probability that it gets to the circle
   of radius $2 y $ without leaving $D$ is  $O(r^{1/2})$ and given
   this the probability to reach the disk of radius $1/2$ is
   $O(y)$, provided that $\delta \leq 1/8$.  This
   establishes~\eref{jun8.1}.

Lemma~\ref{contlemma} implies that
   \[      \Pr\{\xi_1 < \infty \,\mid\, 2^{-j} y < \cradt_{\sigma_1}
    \leq 2^{1-j} y \} \leq c \, S_{\sigma_1}(z)^{4a-1}
      \leq c_2 \, 2^{-\beta j } \, y^{ 4a-1}
       , \]
       where $
       \beta = 2a - \frac 12 > 0.$    The $k =0$
       estimate implies that
  \[ \Pr\{   \cradt_{\sigma_1} \leq 2^{1-j}\, y\}
   \leq c \, 2^{j(d-2)}. \]
Hence,  we see that there exists $c_2$
such that
\begin{eqnarray*}
 \Pr\{\xi_1 < \infty,  \,2^{-j} y < \cradt_{\sigma_1}
     \leq    2^{1-j} y \}  &  \leq & c_2 \,
        2^{j(d-2)}\, \, 2^{-\beta j} \,
         y^{4a-1}\\
         &
           = & c_2 \,
        2^{j(d-2)}\, \, 2^{-\beta j} |z|^{4a-1} \, S(z)^{4a-1}.
        \end{eqnarray*}
        We now choose $\delta < 1/8$ sufficiently small
   so that
 \[      c_2 \delta^{\beta} 2^{m(2-d)} \sum_{j=1}^\infty
 2^{-\beta j} \leq \frac{c_0}{2} . \]
     Using~\eref{jun7.1}, we see that
\begin{equation}  \label{jun7.2}
\Pr\left[E_1 \,\mid\, \xi_1 < \infty,  \, 2^{-j} y < \cradt_{\sigma_1}
    \leq 2^{1-j} y\right] \leq c_0 \, 2^{(m+j-n)(2-d)} .
    \end{equation}
 By combining this with the last estimate and summing over
 $j$, we see that for $|z| \leq \delta$
\[   \hspace{-.4in}
\Pr[E_1] = \sum_{j=1}^\infty
  \Pr\left[E_1 \cap  \{2^{-j} y < \cradt_{\sigma_1}
    \leq 2^{1-j} y\} \right]\leq 2^{-1} \, c_0 \, |z|^{2a - \frac 12}
     \,  S(z)^{4a-1}
   \, \epsilon^{2-d}. \]

 The case $k > 1$ is done inductively in the same way.
 The values $m$, $\delta$, $c_0$ are fixed.
If~\eref{jun8.3} holds for $k-1$, then~\eref{jun7.2} becomes
\[ \Pr[E_k \,\mid\, \xi_1 < \infty,  \, 2^{-j} y < \cradt_{\sigma_1}
    \leq 2^{1-j} y] \leq 2^{1-k} \, c_0 \, 2^{(m+j-n)(2-d)}
 .\]
This establishes~\eref{jun8.3} and hence the proof is complete for $|z| \leq \delta$.

Now for $z$ with $\Im(z) \leq 1$ but $|z| > \delta$ let
 \[  \rho =
   \rho_r = \inf\{t: |z-\gamma(t)| \leq r \}. \]
 By considering the line segment from $z$ to $\gamma(\rho)$
 and using conformal invariance and the Beurling estimate,
 we see there exists $\rho = \rho_\delta$ such that  $|\tilde h_{\rho}(z)| \leq \delta$ for
 all $y \leq 1$.  We
 fix such an $r < 1/10$ and first note that
 Lemma~\ref{contlemma2}
 implies that
 \[      \Pr\{\rho < \infty\} \leq c \, S(z)^{4a-1}, \]
and the estimate above gives
\[   \Pr\{\tau < \infty \,\mid\, \rho < \infty\} \leq c \, \epsilon^{2-d}
  \]
 which finishes the proof.
    \end{proof}

   \begin{proposition}  \label{jun14.prop1}
     For every $\rho >0$, there
   exists $c < \infty$
  such that if $\epsilon >0$ and
   \[  \lambda = \lambda_r =\inf\{t: |\gamma(t)| = r\}, \]
   then for $|z| \leq r$,
  \[ \Pr\{\tau_\epsilon < \infty, \, \cradt_\infty \leq (1-\rho) \, \cradt_{\lambda}
 \} \leq  c\, S(z)^{4a-1} \,
 (|z|/r)^{4a-1}\, \epsilon^{2-d}. \]
    \end{proposition}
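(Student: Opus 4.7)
The plan is to apply the strong Markov property of radial SLE$_\kappa$ at the stopping time $\lambda$ and reduce the joint estimate to Proposition~\ref{prop22} and Lemma~\ref{contlemma2} applied to the post-$\lambda$ curve, together with a Beurling-type geometric bound on the image point $z' := \tilde h_\lambda(z)$. Conditionally on $\mathcal{F}_\lambda$, the shifted and mapped curve $s \mapsto \tilde h_\lambda(\gamma(\lambda+s))$ is a radial SLE$_\kappa$ in $\Half^*$ starting from the origin, and conformal invariance of $\Upsilon$ gives $\cradt_\lambda = \Upsilon_{\Half^*}(z')/|\tilde h_\lambda'(z)|$. Consequently the event $\{\cradt_\infty \le (1-\rho)\cradt_\lambda\}$ coincides with the event that this post-$\lambda$ SLE shrinks the conformal radius at $z'$ by the factor $1-\rho$, while on $\{\tau_\epsilon \ge \lambda\}$ the event $\{\tau_\epsilon < \infty\}$ translates to reaching conformal radius $\epsilon^\dagger\,\Upsilon_{\Half^*}(z')$ at $z'$, where $\epsilon^\dagger := \epsilon\,\cradt_0/\cradt_\lambda$. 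The joint event on the post-$\lambda$ curve therefore reduces to reaching conformal radius $\min(\epsilon^\dagger, 1-\rho)\,\Upsilon_{\Half^*}(z')$ at $z'$.

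The key geometric estimate, valid uniformly for $|z| \le r$ and $|\gamma(\lambda)| = r$, is the Beurling bound
\[
  S(z') \;\le\; C\,S(z)\,\frac{|z|}{r},
\]
proved by considering the $\tilde h_\lambda$-image of the line segment from $z$ to $\gamma(\lambda)$ and comparing, via the Beurling estimate, the harmonic measures from $z$ onto the two sides of $\gamma_\lambda$ in $\Half^*\setminus\gamma_\lambda$ with those from $z'$ onto the two sides of the real line in $\Half^*$. With this in place, split the probability as $P_1 + P_2$ according to whether $\tau_\epsilon < \lambda$ or $\tau_\epsilon \ge \lambda$. For $P_1$: the event $\{\tau_\epsilon<\lambda\}$ is $\mathcal{F}_\lambda$-measurable, so Lemma~\ref{contlemma2} gives $\Pr\{\cradt_\infty \le (1-\rho)\cradt_\lambda \mid \mathcal{F}_\lambda\} \le c_\rho\,S(z')^{4a-1}$; combined with the geometric estimate and $\Pr\{\tau_\epsilon < \lambda\} \le \Pr\{\tau_\epsilon<\infty\} \le c\,S(z)^{4a-1}\epsilon^{2-d}$ from Proposition~\ref{prop22}, and using $S(z)\le 1$ to absorb a spurious factor of $S(z)^{4a-1}$, this yields $P_1 \le c\,S(z)^{4a-1}(|z|/r)^{4a-1}\epsilon^{2-d}$. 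For $P_2$: Proposition~\ref{prop22} applied to the joint post-$\lambda$ event bounds the conditional probability by $c\,S(z')^{4a-1}\min(\epsilon^\dagger,1-\rho)^{2-d}$, and one takes expectation via a dyadic decomposition of the ratio $\cradt_0/\cradt_\lambda$ controlled by $\Pr\{\cradt_\lambda \le 2^{-k}\cradt_0\} \le c\,S(z)^{4a-1}\,2^{-k(2-d)}$ from Proposition~\ref{prop22}.

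The main obstacle will be to carry out the dyadic sum in $P_2$ without incurring a spurious logarithmic factor in $\epsilon$. This requires coupling the geometric estimate on $S(z')$ to the scale of $\cradt_\lambda$: on atypical scales where $\cradt_\lambda \ll \cradt_0$ the curve has come close to $z$, and either the implied improvement in $S(z')$ or the fast decay of the scale probability must compensate the growth of $\min(\epsilon^\dagger, 1-\rho)^{2-d}$. A secondary technical issue is that the pointwise geometric estimate may fail on the exceptional event where $z$ lies atypically close to $\gamma_\lambda$, in which case that event will be controlled separately by another application of Proposition~\ref{prop22}.
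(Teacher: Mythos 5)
Your reduction via the strong Markov property at $\lambda$, the conformal covariance of $\cradt$, the treatment of $P_1$, and the dyadic decomposition in the scale of $\cradt_\lambda/\cradt_0$ are all sound and are essentially the route the paper takes (the paper does not split on $\{\tau_\epsilon<\lambda\}$ separately; those configurations are just the large-$k$ terms of the same dyadic sum). However, what you label ``the main obstacle'' in $P_2$ is not a technicality to be deferred: it is the actual content of the proof, and your proposal does not supply it. Moreover, one of the two escape routes you offer is demonstrably insufficient: the decay $\Pr\{\cradt_\lambda\le 2^{-k}\cradt_0\}\lesssim S(z)^{4a-1}2^{-k(2-d)}$ is \emph{exactly} cancelled by the growth $(\epsilon^\dagger)^{2-d}\asymp \epsilon^{2-d}2^{k(2-d)}$ of the conditional probability, so relying on ``fast decay of the scale probability'' alone leaves you with a sum of order $\log(1/\epsilon)$ constant-size terms, i.e.\ precisely the spurious logarithm.

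The missing input, which is what the paper proves (inside the proof of Proposition~\ref{prop22}, the analogue of \eqref{jun8.1}, and then quotes in the proof of Proposition~\ref{jun14.prop1}), is the scale-coupled improvement of your Beurling bound: on the event $E_k=\{2^{-k}\cradt_0<\cradt_\lambda\le 2^{-k+1}\cradt_0\}$ one has $S_\lambda(z)\le c\,2^{-k/2}\,\Im(z)\,r^{-1}$, not merely $S_\lambda(z)\le c\,\Im(z)/r$. The point is that on $E_k$ the Koebe estimate forces the curve to have come within distance $\asymp 2^{-k}\Im(z)$ of $z$; by the Beurling estimate a Brownian motion from $z$ escapes to distance $\asymp\Im(z)$ without hitting $\gamma_\lambda$ with probability $O(2^{-k/2})$, and then reaching scale $r$ costs a further $O(\Im(z)/r)$; bounding $S_\lambda$ by this harmonic-measure quantity gives the claim. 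Raising to the power $4a-1$ produces the factor $2^{-k(2a-\frac12)}$ with $2a-\frac12>0$ (here $\kappa<8$ is used), and this geometric decay is what makes the dyadic sum converge after the exact cancellation noted above, yielding $c\,S(z)^{4a-1}(|z|/r)^{4a-1}\epsilon^{2-d}$ with no logarithm. (It also handles your ``secondary technical issue'': the closer the curve comes to $z$, the \emph{better} the bound on $S_\lambda(z)$, so no separate exceptional event is needed.) Until you prove this estimate --- or an equivalent coupling of $S(z')$ to $\cradt_\lambda/\cradt_0$ --- the argument for $P_2$ is incomplete.
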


    \begin{proof}
 We fix $\rho$ and allow constants to depend
 on $\rho$.  Without loss of generality, we assume
 that $\epsilon = 2^{-n}$.
   Let $E_k$ denote the event
 \[   E_k = \{2^{-k} \, \cradt_0
  < \cradt_\lambda \leq 2^{-k+1} \, \cradt_0\}.\]
As shown above in the proof of Proposition \ref{prop22}, there exists $c$ such that
on the event $E_k$,
\[     S_\lambda(z) \leq  c \, 2^{-k/2} \,
     \Im(z)\, r^{-1}
      \leq c \, 2^{-k/2} \, (|z|/r), \]
and if $k > 1$,
\[  \Pr[E_k] \leq c \,S(z)^{4a-1} \,
 2^{-k(2-d)}. \]
There exists $c_1$ such that
\[
  \Pr\{\tau_\epsilon < \infty, \, \cradt_\infty
 \leq (1-\rho) \, \cradt_\lambda \,\mid\, E_k\}
    \leq  c_1 \, S_\lambda(z)^{4a-1}
    \, 2^{-(n-k)(2-d)}.
   \]
By combining these estimates and summing over $k$,
we see that there exists $c_2$ such that
\[ \Pr\{\tau_\epsilon < \infty,  \, \cradt_\infty \leq (1-\rho) \, \cradt_{\lambda}
 \} \leq c_2 \, (|z|/r)^{4a-1} \, S(z)^{4a-1}
  \, \epsilon^{2-d} \]
and the proof is complete.
  \end{proof}

 \subsection*{Acknowledgments}
 
This paper had its origin at the
\emph{Random Spatial Processes} program held  at
the Mathematical Sciences Research Institute in Berkeley, CA, during Spring 2012. All three authors benefitted from extended visits to MSRI and are grateful to the organizers for the invitation to participate in the program.
 Thanks are also owed to Dapeng Zhan, Nikolai Makarov, and Nam-Gyu Kang for useful discussions. The research of the second author is supported in part by the Natural Sciences and Engineering Research Council (NSERC) of Canada and the research of the third author is supported by National Science Foundation grant  DMS-0907143.

\end{document}